\documentclass[10pt]{amsart}
\usepackage[margin=1.2in,marginparsep=0.1in,marginparwidth=1in]{geometry}
\usepackage{amssymb,amsmath,amsthm,amstext,amscd,latexsym,graphics,graphicx,bbm,caption}
\usepackage[dvipsnames,svgnames,table]{xcolor}
\usepackage[plainpages=false,colorlinks=true, pagebackref]{hyperref}

\usepackage{mathtools}
\usepackage{tikz-cd}
\usepackage{cleveref}
\usepackage{enumitem}
\usepackage{setspace}
\usepackage{centernot}

\hypersetup{citecolor=Sepia,linkcolor=blue, urlcolor=blue}

\usepackage{cite}  

\makeatletter
\def\@tocline#1#2#3#4#5#6#7{\relax
	\ifnum #1>\c@tocdepth % then omit
	\else
	\par \addpenalty\@secpenalty\addvspace{#2}%
	\begingroup \hyphenpenalty\@M
	\@ifempty{#4}{%
		\@tempdima\csname r@tocindent\number#1\endcsname\relax
	}{%
		\@tempdima#4\relax
	}%
	\parindent\z@ \leftskip#3\relax \advance\leftskip\@tempdima\relax
	\rightskip\@pnumwidth plus4em \parfillskip-\@pnumwidth
	#5\leavevmode\hskip-\@tempdima
	\ifcase #1
	\or\or \hskip 2em \or \hskip 2em \else \hskip 3em \fi%
	#6\nobreak\relax
	\dotfill\hbox to\@pnumwidth{\@tocpagenum{#7}}\par
	\nobreak
	\endgroup
	\fi}
\makeatother
\newtheorem{intro-thm}{Theorem}[]
\theoremstyle{plain}
\newtheorem{thm}{Theorem}[section]
\newtheorem{theorem}[thm]{Theorem}

\newtheorem{lemma}[thm]{Lemma}

\newtheorem{proposition}[thm]{Proposition}

\theoremstyle{definition}

\newtheorem{definition}[thm]{Definition}
\newtheorem{example}[thm]{Example}

\newcommand{\Tr}{{\rm Tr}}
\newcommand{\Gal}{{\rm Gal}}

\newcommand{\A}{{\mathbb A}}

\newcommand{\Q}{{\mathbb Q}}

\newcommand{\Z}{{\mathbb Z}}
\newcommand{\p}{{\mathfrak p}}

\newenvironment{acknowledgements}
{\par\noindent\textbf{Acknowledgements.}\ }
{\par}
\newcommand{\Nm}{\mathrm{N}}

\input{xy}
\newcommand{\Br}{\operatorname{Br}}

\newcommand{\res}{\operatorname{Res}}
\newcommand{\inv}{\operatorname{inv}}

\usepackage[OT2,T1]{fontenc}
\DeclareSymbolFont{cyrletters}{OT2}{wncyr}{m}{n}
\DeclareMathSymbol{\Sha}{\mathalpha}{cyrletters}{"58}
\begin{document}
	\title[Vanishing of Brauer--Manin Obstruction under Field Extensions]{On the Vanishing of the Brauer-Manin Obstruction for Normic Bundles}
	\author[M. Biswas]{Mridul Biswas} \address{School of Mathematics and Statistics, University of Canterbury, Christchurch, 8041, New Zealand } \email{biswasmridul123@gmail.com}
	\author[D. C-Ramachandran]{Divyasree C-Ramachandran} \address{Department of Mathematics, Indian Institute of Science Education and Research Pune, Dr Homi Bhabha Rd, Pashan, Pune, 411008, India}\email{crdivya99@gmail.com}
    
	\author[B. Samanta]{Biswanath Samanta} \address{Department of Mathematics, Indian Institute of Science Education and Research Mohali, Sector 81, SAS Nagar, 140306, India}\email{biswa.uoh@gmail.com}
	
	\thanks{Keywords: Brauer group, Brauer-Manin obstruction, normic bundles, conic bundles, rational points}
\begin{abstract}

We study the behaviour of the Brauer--Manin obstruction under finite extensions for normic bundles. For $(p,mp)$-normic bundles, we establish vanishing results for the Brauer–Manin obstruction under divisibility conditions on the extension degree.
\end{abstract}

\begin{abstract}
We study the behaviour of the Brauer--Manin obstruction to the existence of rational points under finite field extensions. For \((p,mp)\)-normic bundles over number fields, we prove that the Brauer--Manin obstruction vanishes after base change to finite extensions whose degrees satisfy suitable \(p\)-divisibility conditions depending on \(m\). We further show that, for \((p,2p)\)-normic bundles with $p=2$ or $3$, it is enough to assume that the extension degree is divisible by \(p\). We also prove that the divisibility hypothesis is, in general optimal, by constructing a conic bundle for which the Brauer--Manin obstruction persists over a quadratic extension.
\end{abstract}

		\maketitle

%%%%%%%%%%%%%%%%%%%%%%%%%%%%%%%%%%%%%%%%%%%%%%%%%%%%%%%%%%%%%%%%%%%%%%%%%%%%%%%%%%%%%%%%%%%%%%%%%%%%%%%

%%%%%%%%%%%%%%%%%%%%%%%%%%%%%%%%%%%%%%%%%%%%%%%%%%%%%%%%%%%%%%%%%%%%%%%%%%%%%%%%%%%%%%%%%%%%%%%%%%%%%%%
\section{Introduction}\label{Introduction}

The Brauer--Manin obstruction plays a central role in the study of rational points on varieties over number fields. A fundamental question is whether a variety \(X\) admits a \(k\)-rational point. When \(X(k)=\emptyset\), it is natural to ask instead over which finite extensions \(L/k\) the variety \(X\) acquires a rational point, that is, for which extensions \(X(L)\neq\emptyset\). Since this is often difficult to determine directly, one instead considers the weaker question of when the Brauer--Manin set $X(\mathbb{A}_L)^{\mathrm{Br}}$ is non-empty.

For a smooth projective variety $X$ over a global field $k$, one has \(X(k) \subset X(\mathbb{A}_k)^{\mathrm{Br}} \subset X(\mathbb{A}_k)\), where $X(\mathbb{A}_k)$ denotes the adelic points of $X$, and $X(\mathbb{A}_k)^{\mathrm{Br}}$ the Brauer--Manin set. It is natural to study how these sets behave under finite field extensions.

In \cite[Section~4]{Vir}, the author asks how the sets $X(\mathbb{A}_k)^{\mathrm{Br}}$ and $X(\mathbb{A}_L)^{\mathrm{Br}}$ are related as $L/k$ varies. As a complete description of these sets is generally too subtle, one instead focuses on the vanishing of the Brauer--Manin obstruction over field extensions.

It is known that, for everywhere locally soluble conic bundles, cubic surfaces, and quartic del Pezzo surfaces, the Brauer--Manin obstruction vanishes over every finite extension that is linearly disjoint from a fixed finite extension, provided that the degree of the extension satisfies the appropriate divisibility condition: even degree for conic bundles and quartic del Pezzo surfaces, and degree divisible by three for cubic surfaces. These results follow from \cite[Lemma~3.1]{CV}, \cite[Corollary~1]{swD}, and \cite[Lemma~3.4, Remark~2]{CTPoo}. More generally, an analogous conclusion holds for any variety whose geometric Picard group is torsion-free and geometric Brauer group is trivial, by \cite[Corollary~3.4]{CV}. In this direction, \cite[Theorem~1.1]{RW} shows that for Ch\^atelet surfaces the Brauer--Manin obstruction vanishes over every extension of even degree, provided that either the Brauer group is non-constant or the surface has points over all completions of the base field.

In this paper, we study $(p,mp)$-normic bundles (see Definition~\ref{normic bundle}), which are a higher-dimensional analog of conic bundles and a special case of the notion defined in \cite{AVB} and \cite{AB}. By assuming the existence of local points over the base field, we determine classes of finite extensions $L/k$ for which the Brauer--Manin obstruction vanishes for such varieties. In particular, we obtain a divisibility criterion on the degree $[L:k]$ ensuring that $X(\A_L)^{\Br} \neq \emptyset$. Moreover, assuming Schinzel's hypothesis\cite{ScH}, \(X\) has $L$-rational points by \cite[Theorem~4.2]{CsD}.

\begin{definition}\label{normic bundle}
    Let $p$ be a prime number and $m\geq 1$. A \emph{$(p,mp)$-normic bundle} is a smooth compactification of the affine variety $\Nm_{K/k}(\vec{z})=P(x)$, where $K/k$ is a Galois extension of degree $p$ and $P(x)\in k[x]$ is a separable polynomial of degree $mp$. 

\end{definition}

Note that when $p=2$, a $(p,mp)$-normic bundle is precisely a conic bundle with $2m$ geometric bad fibers, and when \(p=m=2\), it is a Ch\^atelet surface. We now state our main vanishing results.

\begin{theorem}\label{main-vanishing}
Let \(X\) be a \((p,mp)\)-normic bundle over a number field \(k\), and let $P(x)$ be the separable polynomial appearing in the norm equation defining $X$. Suppose that, for every irreducible
factor \(P_i(x)\) of \(P(x)\), the residue field
\(\kappa(P_i)=k[x]/(P_i(x))\) is a Galois extension of \(k\). If
\(X(\A_k)\neq\emptyset\), then the following holds.

\begin{enumerate}
    \item If \(m \leq 2\), then for every finite Galois extension
    \(L/k\) with \(p\mid[L:k]\), we have \( X(\A_L)^{\Br}\neq\emptyset.\)
    Moreover, the same conclusion holds for every extension \(L/k\) with \([L:k]=p\),
    
    \item If \(m\ge3\), then for every finite Galois extension \(L/k\) with
    \(p^{m+1}\mid[L:k]\), we have \(X(\A_L)^{\Br}\neq\emptyset.\) 
    
    Moreover, if \(3\le m\le p\), then for every finite Galois extension \(L/k\)
    with \(p^{m-1}\mid[L:k]\), we have
    \( X(\A_L)^{\Br}\neq\emptyset.\)
\end{enumerate}
\end{theorem}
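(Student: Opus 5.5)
The plan is to show that a fixed adelic point $x=(x_v)\in X(\A_k)$, viewed in $X(\A_L)$ through the diagonal maps $X(k_v)\to X(L_w)$, is orthogonal to all of $\Br X_L$. First, if $K\subset L$, then $K\otimes_k L$ is split, so the norm form over $L$ represents every value and $X(L)\neq\emptyset$. We may therefore assume that $K_L:=KL$ is a cyclic extension of $L$ of degree $p$, with character $\chi_L=\operatorname{res}_{L/k}\chi$. We then need an explicit description of $\Br X_L/\Br L$. Since $X_L$ is a $(p,mp)$-normic bundle over $L$ and its generic fibre is a Severi--Brauer-type norm variety, we have $\Br X_L/\Br L=\Br_{\rm vert}X_L/\Br L$ (the horizontal part vanishes as in \cite{AB,AVB}). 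Factor $P=c\prod_j Q_j$ into irreducibles over $L$. The standard residue computation then identifies $\Br X_L/\Br L$ with
\[
\Big\{(n_j)\in(\Z/p)^{J}\ :\ n_j=0 \text{ unless } \chi_L|_{L[x]/(Q_j)}=0\Big\}\Big/\Z/p\cdot(1,\dots,1),
\]
via $(n_j)\mapsto \sum_j n_j(\chi_L,Q_j(x))$. This is the step that uses the Galois hypothesis: each $\kappa(P_i)$ is Galois over $k$, so each $Q_j$ dividing $P_i$ has residue field $\kappa(P_i)L$. Hence whether $\chi_L$ dies on it depends only on $i$, and the $Q_j$ dividing a given $P_i$ form a single $\Gal(L/k)$-orbit.

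Next comes the key pairing computation. Let $Q_j\mid P_i$ and let $F_j\subset L$ be the field of definition of $Q_j$, i.e. the fixed field of its stabiliser in $\Gal(L/k)$. Then $[F_j:k]$ equals the number of factors of $P_i$ over $L$, so $[F_j:k]\le\deg P_i\le mp$. The class $\beta_j=(\chi_{F_j},Q_j(x))$ lies in $\Br X_{F_j}$ (after adding a suitable combination if needed, as in the description above), and $(\chi_L,Q_j)=\operatorname{res}_{L/F_j}\beta_j$. For any place $u$ of $F_j$ and any $w\mid u$ we have $\inv_w\operatorname{res}(\beta)=[L_w:F_{j,u}]\inv_u\beta$. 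Summing over $w\mid u$ gives $[L:F_j]\inv_u\beta_j(x_v)$, because $x_v$ is defined over $k_v\subset F_{j,u}$. Since $p\beta_j=0$, the total pairing $\langle (\chi_L,Q_j),x\rangle$ vanishes as soon as $p\mid[L:F_j]$. It therefore suffices to show that every generator can be chosen with $p\mid[L:F_j]$, or can be written in terms of classes descending to $k$ (for which $p\mid[L:k]$ is enough).

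The counting is the case analysis on $m$. If $v_p([F_j:k])<v_p([L:k])$, we are done. Since $[F_j:k]\le mp$, we get $v_p([F_j:k])\le\lfloor\log_p(mp)\rfloor$. This is $\le m$ always, which gives the hypothesis $p^{m+1}\mid[L:k]$; and when $m\le p$ it is $\le 2$, which is improved to $p^{m-1}$ using $m\ge3$. For $m\le2$ we argue differently. Only factors $P_i$ with $\chi|_{\kappa(P_i)}=0$ matter, so $p\mid\deg P_i$, and $\sum\deg P_i=mp\le 2p$ forces at most two such $P_i$. The relation $\sum_j(\chi,Q_j)=0$ then shows that $\Br X_L/\Br L$ is generated by the single restricted classes $(\chi,P_i)$ from $k$, or by full orbit sums that are corestrictions. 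For the latter, $\langle\operatorname{cores}\gamma,x\rangle=\langle\gamma,x\rangle$ reduces the question back to $k$, and the factor $[L:k]\equiv0\pmod p$ kills it. The claim for non-Galois $L$ with $[L:k]=p$ follows the same way, since then either $K\subset L$ or $P$ factors over $L$ only through factors already defined over $k$ or of degree with $p\mid[L:F_j]$.

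The main obstacle I expect is the $m\le2$ case with $L$ Galois but $p$-part of $[L:k]$ equal to $p$ exactly. Here a factor $Q_j$ may be defined over some $F_j$ with $v_p([F_j:k])=1$, and the crude pairing bound fails. I would first try to show that the Galois-orbit sum relation, combined with the constraint that at most two $P_i$ contribute, forces every nonzero class of $\Br X_L/\Br L$ to be a sum of full $\Gal(L/k)$-orbits, and hence to come from $\Br X$.
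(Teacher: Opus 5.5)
\textbf{The proof has genuine gaps: the description of $\Br X_L/\Br L$ is wrong, the per-generator descent fails, and the case you flagged as the main obstacle is left open.}

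Your pairing mechanism is sound: restrict a class from $\Br X_F$ with $F\subseteq L$ whose image in $\Br X_F/\Br F$ is $p$-torsion, and use $\sum_{w\mid u}[L_w:F_u]=[L:F]$. This is the computation in Lemma~\ref{isomorphism in Brauer group}. The problem is what you feed into it.

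First, your description of $\Br X_L/\Br L$ has the condition reversed.
\begin{itemize}
\item The residue of $(\chi_L,Q_j)$ at $Q_j$ is $\chi_L|_{\kappa(Q_j)}$, and this residue is allowed on a normic bundle.
\item When $\chi_L|_{\kappa(Q_j)}=0$, one has $p\mid\deg Q_j$, and $(\chi_L,Q_j)$ is unramified on all of $\mathbb{P}^1_L$, hence constant. These are exactly the factors that do \emph{not} matter.
\item The real constraint is at infinity. By \cite[Theorem~3.2]{AVB}, $\Br X_L/\Br L$ is the image of $\{(n_j):\sum_j n_j\deg Q_j\equiv 0 \pmod p\}/\langle(1,\dots,1)\rangle$.
\end{itemize}
So your step ``only factors with $\chi|_{\kappa(P_i)}=0$ matter, so $p\mid\deg P_i$'' for $m\le 2$ has no basis.

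Second, and more seriously, $\beta_j=(\chi_{F_j},Q_j)$ has residue $\deg Q_j\cdot\chi$ at $\infty$. So in general $\beta_j\notin\Br X_{F_j}$ unless $p\mid\deg Q_j$.
\begin{itemize}
\item The ``suitable combination'' needed to fix this generally mixes factors of \emph{different} $P_i$.
\item Such a class is only defined over the compositum of the corresponding fields $F_j$. These fields equal $\kappa(P_i)\cap L$ and depend only on $i$.
\item The $p$-adic valuation of that compositum is bounded by the \emph{sum} of the $v_p([\kappa(P_i)\cap L:k])$, not by their maximum.
\end{itemize}
Your per-factor bound $\lfloor\log_p(mp)\rfloor$ is therefore not the relevant quantity, and the count does not go through. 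Controlling this sum is exactly the role of $N(m,p)$ (Definition~\ref{def:combinatorial-exponent}) in the paper.

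The argument can be repaired, and the repair also supplies the idea you are missing in part (a).
\begin{itemize}
\item Since $\kappa(P_i)/k$ is Galois, the minimal polynomial of a root of $P_i$ over $F_i=\kappa(P_i)\cap L$ stays irreducible over $L$.
\item So if $E$ is the compositum of the $F_i$, then $P$ factors over $E$ exactly as over $L$, and $\Br X_E/\Br E\to\Br X_L/\Br L$ is surjective.
\item Your pairing computation, run once with $F=E$, then finishes whenever $p\mid[L:E]$.
\end{itemize}

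What dissolves your ``main obstacle'' is the observation of Subcase~2A in Theorem~\ref{generalized-p-mp}. If $p\mid[\kappa(P_i)\cap L:k]$ and $\deg P_i=p$, then $\kappa(P_i)\subseteq L$. Hence $P$ has a root in $L$ and $X(L)\neq\emptyset$.
\begin{itemize}
\item For $m\le 2$ with $P$ reducible, every $P_i$ with $p\mid[\kappa(P_i)\cap L:k]$ has degree $p$. So either $X(L)\neq\emptyset$ or $p\nmid[E:k]$.
\item For $m\ge 3$, the remaining $P_i$ have degree $c_ip$ with $c_i\ge 2$, giving $v_p([E:k])\le N(m,p)\le m$.
\item The $p^{m-1}$ claim additionally needs $N(m,p)\le m-2$ for $3\le m<p$ or $m=p\ge 5$.
\item It also needs the case $m=p=3$ treated separately. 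There, $v_3=2$ can only come from $P$ irreducible of degree $9$, where $\overline{\Br}X=0$. Your ``$\le 2$, improved using $m\ge 3$'' skips this.
\end{itemize}

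For non-Galois $L$ with $[L:k]=p$, $F_j$ is not even defined, and ``$p\mid[L:F_j]$'' is the wrong dichotomy. The correct argument: $L\cap\kappa(P_i)\neq k$ forces $L\subseteq\kappa(P_i)$. Since $P$ is reducible of degree $\le 2p$, this gives $\deg P_i=p$ and $L=\kappa(P_i)$, so again $X(L)\neq\emptyset$.

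Once repaired, your route descends the Brauer classes directly to $E$. The paper instead builds a subfield $R$ via Frattini and Schur--Zassenhaus (Lemma~\ref{field-theory lemma}) and climbs back up with Lemma~\ref{points below to points above}. Your version would avoid that group theory.
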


The next result strengthens Theorem~\ref{main-vanishing} for \(p=2\) and \(3\).
It also removes the assumption that the residue fields of the irreducible
factors of \(P(x)\) are Galois over \(k\). The key point is that the additive
decompositions of \(2p\) are simple in these cases. This allows us
to extend the conclusion to arbitrary finite extensions \(L/k\) whose degree is
divisible by \(p\), without assuming that \(L/k\) is Galois. When \(p=2\), this
partially recovers \cite[Theorem~1.1]{RW} for Châtelet surfaces; when \(p=3\),
it gives the analogous result for \((3,6)\)-normic bundles.
% {\color{blue}Not all conic bundles are normic bundles}.
\begin{theorem}\label{3-6}
Let \(X\) be a \((p,2p)\)-normic bundle over a number field \(k\), where
\(p=2\) or \(3\). Assume that \(X(\A_k)\neq\emptyset\). Then, for every finite
extension \(L/k\) satisfying \(p\mid [L:k]\), one has
\(X(\A_L)^{\Br}\neq\emptyset\).
\end{theorem}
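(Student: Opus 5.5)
We first reduce to the case where \(\Br(X_L)/\Br_0(X_L)\) is nonzero and can be computed explicitly. For a normic bundle \(\Nm_{K/k}(\vec z)=P(x)\) with \(K/k\) cyclic of degree \(p\), generated by \(\sigma\), the vertical Brauer group modulo constants is generated by cyclic algebras \((K/k,\sigma,P_i(x))\). Here \(P=c\prod_i P_i\) is the factorisation over the ground field. The only relations come from the product of all the factors being a norm up to a constant. When \(p=2\) or \(3\) and \(\deg P=2p\), every element of \(\Br(X_L)/\Br_0(X_L)\) is vertical, so it suffices to control these cyclic algebras, via the standard residue computation at the bad fibres together with Harari's formal lemma. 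We note that if \(K\otimes_k L\) is split, then \(X_L\) is rational over \(L\). In that case \(X_L\) has an \(L\)-point whenever it has local points, and \(X(\A_k)\neq\emptyset\) gives \(X(\A_L)\neq\emptyset\). So we may assume \(KL/L\) is cyclic of degree \(p\).

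The core of the argument is a case analysis on how \(P\) factors over \(L\). Write \(P=c\prod_i P_i\) over \(L\) and consider the additive decompositions of \(2p\) into the degrees \(\deg P_i\). An element \((KL/L,\sigma,\prod_i P_i^{a_i})\) is unramified on \(X_L\) iff each residue \(a_i\,[\mathrm{res}]\) dies in \(H^1(\kappa(P_i)\otimes\text{ext},\Z/p)\). This happens exactly when \(a_i\) is divisible by \(p\), or when \(K\) embeds into the residue field \(L[x]/(P_i)\), and it is subject to the degree-sum condition \(\sum a_i\deg P_i\equiv 0\pmod p\). For \(p=2\) the decompositions are \(4=4,3+1,2+2,2+1+1,1+1+1+1\). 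For \(p=3\) they are \(6=6,5+1,4+2,\dots,1^6\). In most cases the unramified quotient is trivial, and then \(X(\A_L)^{\Br}=X(\A_L)\neq\emptyset\). The interesting cases are those in which \(P\) has a factor whose residue field contains \(KL\), or in which \(P\) splits into two factors of degree \(p\) (or of degrees divisible appropriately). Then \(\Br(X_L)/\Br_0\) is \(\Z/p\), generated by \(\mathcal A=(KL/L,\sigma,P_1(x))\), with \(\deg P_1\equiv 0 \pmod p\).

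In the nontrivial case, we will exploit the hypothesis \(p\mid[L:k]\) to choose a suitable adelic point. Take a place \(v\) of \(k\) and an adelic point \((x_v)\in X(\A_k)\). Since \(p\mid[L:k]=\sum_{w\mid v}[L_w:k_v]\), we have some flexibility at each \(v\). At a place \(v\) that is inert in \(K\), or more generally at infinitely many places, we want to find a place \(w\) of \(L\) over \(v\) with \(p\mid [L_w:k_v]\). Over \(L_w\) the extension \(KL_w/L_w\) is then split, so the evaluation of \(\mathcal A\) is \(0\) at any \(L_w\)-point. Alternatively, we can move the \(x\)-coordinate at one place to adjust the invariant. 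The point is that, for a fibre over \(x_w\) with \(P_1(x_w)\) of arbitrary valuation mod \(p\), the invariant takes every value in \(\frac1p\Z/\Z\). This uses the fact that \(\deg P_1\) divisible by \(p\) makes the algebra constant at infinity, and that at a place of degree divisible by \(p\) the total invariant \(\sum_{w\mid v}\inv_w\) is the corestriction. We therefore pick the base-changed adelic point \((x_v)_{w}\) at all places and compute \(\sum_w \inv_w\mathcal A(x_w)\). By the restriction–corestriction formula, this equals \(\sum_v [L_w:k_v]\cdot\inv_v(\ldots)\) when \(\mathcal A\) descends to \(k\). When \(p\mid[L:k]\), this sum vanishes modulo the contributions at the finitely many places where the local degrees are not all divisible by \(p\), and those contributions we fix by varying one coordinate.

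The main obstacle is twofold. First, \(\mathcal A\) may not descend to \(k\), since the factorisation of \(P\) over \(L\) may be finer than over \(k\) and the Galois hypothesis on \(L\) has been dropped. Second, for \(p\mid[L:k]\) we still need one place \(w\) where we can realise every invariant. For the first, we will check, decomposition by decomposition, that the degree constraint forces the generator to be the restriction of an algebra \((K/k,\sigma,Q(x))\) for some \(Q\mid P\) defined over \(k\), up to constants. This is precisely where the simplicity of the partitions of \(2p\) for \(p\le3\) enters: a degree-\(p\) factor over \(L\) whose complement also has degree \(p\) is either defined over \(k\) or paired with its conjugate. For the second, we will use Chebotarev to pick a place \(v\) of \(k\) that is unramified, is inert in \(K\), and at which \(P\) has a root. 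Varying \(x_w\) there makes \(v_w(P_1(x_w))\) take an arbitrary residue mod \(p\), which realises every invariant and cancels the global sum.
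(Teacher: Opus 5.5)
\textbf{There are genuine gaps, and the second one is fatal.}

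\textbf{1. Your description of \(\Br(X_L)/\Br_0(X_L)\) is incorrect.} The residue of \((KL/L,\sigma,\prod_i P_i^{a_i})\) along the fibre over \(P_i=0\) is \(a_i\chi_K\) restricted to the function field of that fibre. That field contains \(K\kappa(P_i)\), because the fibre \(\Nm(\vec z)=0\) is split by \(K\), so the residue vanishes for every \(a_i\). The only condition is the one at \(x=\infty\), namely \(\sum a_i\deg P_i\equiv 0\pmod p\); this is \cite[Theorem~3.2]{AVB}, as used in Lemma~\ref{isomorphism in Brauer group}.

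The condition you wrote (\(p\mid a_i\), or \(K\) embeds in \(L[x]/(P_i)\)) is the condition for being unramified on the base \(\mathbb{P}^1_L\). Classes satisfying it everywhere are constant, since \(\Br\mathbb{P}^1_L=\Br L\). So your ``interesting'' generator \((KL/L,\sigma,P_1)\), with \(KL\subset\kappa(P_1)\) and \(p\mid\deg P_1\), actually lies in \(\Br_0\). Your claim that the quotient is usually trivial is also false. On Iskovskikh's surface \(y^2+z^2=(3-x^2)(x^2-2)\), your criterion declares \((-1,3-x^2)\) ramified, yet this class lies in \(\Br X\) and obstructs the Hasse principle.

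\textbf{2. The mechanism for cancelling the invariant fails.} Take a good-reduction place \(w\) where \(KL_w/L_w\) is unramified of degree \(p\). (If it is split, the evaluation is identically zero.) A point of \(X(L_w)\) over \(x_w\) needs \(P(x_w)\in\Nm(KL_w^\times)\), i.e.\ \(v_w(P(x_w))\equiv 0\pmod p\).
\begin{itemize}
\item For integral \(x_w\), at most one root of \(P\) is \(w\)-adically close to \(x_w\). Hence \(v_w(P_1(x_w))\) is either \(0\) or \(v_w(P(x_w))\), and in both cases it is \(\equiv 0\pmod p\).
\item For non-integral \(x_w\), the condition \(p\mid\deg P_1\) gives the same conclusion.
\end{itemize}
So \(\mathcal A\) has constant evaluation at your Chebotarev place; this is the good-place constancy used in the proof of Theorem~\ref{Conic bundle BM}. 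Moreover, this adjusting step never uses \(\deg P=2p\) with \(p\le 3\). If it worked, it would contradict Theorem~\ref{Conic bundle BM} and Iskovskikh's example.

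\textbf{3. You misplace the difficulty.} When \(\mathcal A\) is the restriction of some \(\alpha\in\Br X\), no adjustment is needed. Indeed \(\sum_{w\mid v}[L_w:k_v]=[L:k]\) for every \(v\), and \(p\alpha\in\Br_0X\), so the total invariant of the base-changed adelic point vanishes. The real difficulty is the non-descending case, e.g.\ \(P\) irreducible over \(k\) but factoring over \(L\). There your claim that the generator comes from \(k\) is false, and the proposal offers nothing.

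\textbf{The missing idea} is to run the case analysis on the factorisation of \(P\) over \(k\), not over \(L\):
\begin{itemize}
\item If \(K\subset L\), or if \(P\) has a linear factor, there is an \(L\)-point.
\item If \(P\) is irreducible, or \(p=3\) and the factor degrees are \(4+2\), then \(\overline{\Br}X=0\) by \cite[Corollary~3.3]{AVB}. Hence \(X(\A_k)^{\Br}=X(\A_k)\), and the corestriction argument of Lemma~\ref{points below to points above} controls all of \(\Br X_L\).
\item Otherwise every factor has degree \(2\) or \(3\). Then either \(L\) is linearly disjoint from \(K\) and from all the \(\kappa(P_i)\), and Lemma~\ref{isomorphism in Brauer group} applies; or some \(P_i\) acquires a linear factor over \(L\), giving \(X(L)\neq\emptyset\) (Proposition~\ref{quadratic-cubic-factors}).
\end{itemize}
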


The divisibility hypothesis on \([L:k]\) in Theorem~\ref{main-vanishing} is essential; see Example~\ref{bad place split completely implies persistence}. However, it is not sufficient to assume only that \(p\mid [L:k]\). This is demonstrated by Theorem~\ref{Conic bundle BM}, which shows that the Brauer--Manin obstruction may persist after a finite Galois extension $L/k$ whose degree is divisible by $p$. More precisely, the variety \(X\) is a conic bundle with six geometric bad fibers. By Theorem~\ref{main-vanishing}, we have \(X(\mathbb{A}_L)^{\Br}\neq\emptyset\) for every finite Galois extension \(L/k\) satisfying \(2^4\mid [L:k]\). On the other hand, for the quadratic extension \(L=\Q(\sqrt{17})\), we prove that \(X(\mathbb{A}_L)^{\Br}=\emptyset\). Thus, the hypothesis \(2^4\mid [L:k]\) in Theorem~\ref{main-vanishing} cannot, in general, be weakened to the assumption \(2\mid [L:k]\). Moreover, this example shows that one cannot expect Theorem~\ref{3-6} to hold for arbitrary conic bundles, since the variety in Theorem~\ref{Conic bundle BM} is a \((2,6)\)-normic bundle.

This example also complements the result of \cite[Theorem~1.2]{RW}, which shows that there exists a finite collection of quadratic extensions of $k$ outside of which the Brauer--Manin obstruction vanishes after base change. Our example shows that this exceptional collection may indeed be nonempty.

\begin{theorem}\label{Conic bundle BM}
Let $X$ be the conic bundle surface over $\mathbb{Q}$ defined by the affine equation \[y^2-5z^2=f(x)\] where $ f(x) = -(67x^4 + 340x^3 + 1095x^2 + 1700x + 1675)(x^2 + 1) $. Let $L=\mathbb{Q}(\sqrt{17})$, and
\(
g(x)
=(1+2\sqrt{17})x^2
+5\sqrt{17}\,x
+5(1+2\sqrt{17})\). Then $X(\mathbb{A}_{\mathbb{Q}})\neq\emptyset$ and $X_L$ has Brauer--Manin obstruction given by the element $\mathcal{B}=(5,g(x))\in\operatorname{Br}(X_L)$.
\end{theorem}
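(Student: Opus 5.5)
The plan has three parts: show $X(\A_{\Q})\neq\emptyset$ by checking local solubility at the relevant places, show that $\mathcal{B}=(5,g(x))$ really extends to an element of $\Br(X_L)$, and show that $\inv_w\mathcal{B}$ is constant on $X(L_w)$ at every place $w$ of $L$, with these constants summing to $1/2$.

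\emph{Local solubility over $\Q$.} Outside the places dividing $2\cdot 5\cdot\infty$ and the primes dividing the discriminant of $f$, the fibration has a good fibre with points, so local points exist by Hensel's lemma together with the Lang--Weil estimates for smooth conics. At the remaining places we exhibit points by hand.
\begin{itemize}
\item At $\infty$: $f(x)<0$ everywhere, but $y^2-5z^2$ represents negative reals, so there is no obstruction.
\item At $5$: we need $f(x)$ to be a norm from $\Q_5(\sqrt5)$. We choose $x$ so that $f(x)\equiv \pm1$ or $\pm 4 \pmod 5$ with suitable valuation. Note that $67x^4+\dots+1675\equiv 2x^4 \pmod 5$, so with $x$ a unit we get $f(x)\equiv -2x^4(x^2+1)$. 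We then pick $x$ making this a square mod $5$.
\item At $2$ and at the remaining bad primes: a similar direct choice of $x$, using that $5$ is a square in $\Q_p$ for many small $p$.
\end{itemize}

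\emph{$\mathcal{B}$ is unramified.} Write $f = -h(x)(x^2+1)$ with $h$ the quartic. The key identity to verify is that over $L$ the quartic $h$ factors as $c\,g(x)\,\bar g(x)$, where $\bar g$ is the conjugate of $g$ under $\sqrt{17}\mapsto-\sqrt{17}$. A quick check on leading coefficients gives $(1+2\sqrt{17})(1-2\sqrt{17})=-67$, consistent with $h$ having leading coefficient $67$ and the minus sign in $f$. Given this factorization, $(5,g)$ is the standard vertical element for a conic bundle $\Nm(\cdot)=g\cdot(\text{rest})$. It is unramified along the smooth fibres away from the roots of $g$; along $g=0$ the residue is trivial because $5$ is a norm on that fibre; and at $x=\infty$ the degree of $g$ is even, so there is no residue there. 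Together with the relation $(5,g\bar g(x^2+1))=(5,-y^2+5z^2)=0$, this shows $\mathcal{B}\in\Br(X_L)$.

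\emph{Computing the local invariants.} For places $w$ of $L$ where $5$ is a square in $L_w$, the invariant vanishes. Otherwise, at a good place $w\nmid 2\cdot 5\cdot 17\cdot\mathrm{disc}$, we show $g(x)$ has even valuation whenever the point is local. Here we use the alternative representatives $(5,\bar g(x)(x^2+1))$ and $(5,g/x^2)$ to handle $x$ large or close to roots, and this gives invariant $0$. At the infinite places, $L\subset\mathbb{R}$ has two real embeddings and $5>0$, so both invariants are $0$. The heart of the proof is the remaining places above $2$, $5$, and $17$. Above $5$: $5$ is unramified in $L$ (since $17\equiv2$ is a nonsquare mod $5$, $5$ is inert), so $L_w=\Q_{25}$, where $5$ is still a nonsquare times a uniformizer. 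We must show that $g(x)$ has a fixed class modulo norms from $L_w(\sqrt5)$ for every local point. We do this by reducing $g$ mod $5$ to $(1+2\sqrt{17})x^2$, and treating the cases $v(x)\ge0$ and $v(x)<0$ separately using the alternative representatives. Above $2$, which splits in $L$ since $17\equiv1\pmod 8$, and above $17$, which ramifies, we compute similarly. The aim is to show that exactly one place contributes $1/2$ (we expect it to be the place above $5$, or the pair above $2$ contributing asymmetrically).

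The main obstacle is this case-by-case constancy of the invariants at the places above $2$ and $5$. To control these cases we will lean on the explicit factorization and on Hilbert symbol formulas in $\Q_{25}$ and in $\Q_2$.
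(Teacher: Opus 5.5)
\textbf{There is a gap: the local invariants are never actually computed.} Your outline has the same shape as the paper's argument. Your factorization $f=g\,\bar g\,(x^2+1)$, with the residue check that $\mathcal{B}$ is unramified, is a real addition, since the paper takes $\mathcal{B}\in\Br(X_L)$ for granted. But the proposal stops where the content of the theorem lies. You never prove that $\sum_w\inv_w\mathcal{B}(P_w)=\tfrac12$ for every adelic point, and you are unsure which place contributes.

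Your alternative guess, ``the pair above $2$ contributing asymmetrically,'' cannot happen. The prime $2$ splits in $L$, both completions are $\Q_2$, and $\Q_2(\sqrt5)/\Q_2$ is unramified. At both places $g\equiv x^2+x+1\pmod w$, which has no root in $\mathbb{F}_2$. Hence $v_w(g(x))\in\{0,2v_w(x)\}$ is always even, and $\inv_w=0$ at both places.

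The same mechanism handles $w\mid 17$. There $5$ is a nonsquare unit mod $17$, so $L_w(\sqrt5)/L_w$ is unramified. Also $g\equiv x^2+5\pmod w$ has no root in $\mathbb{F}_{17}$, because $-5\equiv 12$ is a nonsquare.

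You also pass over the bad primes other than $2,5,17$. In the paper these are $19$, $89$ and $2268209$; they come from $\Nm(\operatorname{disc} g)=5^2\cdot 19^2\cdot 89$ and $\operatorname{Res}(h,x^2+1)=|h(i)|^2=2268209$. Your plan tacitly assumes $5$ is a square at these primes. That is true by quadratic reciprocity, since each is $\equiv 4\pmod 5$, but it must be checked. Otherwise neither your ``$5$ is a square'' shortcut nor your parity-of-valuation argument is available there.

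\textbf{The missing idea at $w\mid 5$.} Since $5$ is inert in $L$, there is a unique such place, with $L_w=\Q_{25}$ and $5$ a uniformizer. For a $w$-unit $u$, $(5,u)_w$ is the quadratic character of $u\bmod w$ in $\mathbb{F}_{25}^\times$. The element $1+2\sqrt{17}$ is a nonsquare in $\mathbb{F}_{25}$, because its norm $-67\equiv 3$ is a nonsquare in $\mathbb{F}_5$. This gives three cases.
\begin{itemize}
\item For $v_w(x)=0$, your reduction $g\equiv(1+2\sqrt{17})x^2\pmod w$ settles it.
\item For $v_w(x)\ge 1$, that reduction gives $0$. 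Instead write $g(x)=5u$ with $u\equiv 1+2\sqrt{17}\pmod w$, and use $(5,5)_w=(5,-1)_w=1$.
\item For $v_w(x)<0$, the quotient $g(x)/\bigl((1+2\sqrt{17})x^2\bigr)$ is a principal unit, hence a square.
\end{itemize}
So $\inv_w\mathcal{B}(P_w)=\tfrac12$ for every $P_w\in X(L_w)$, and this is the only nonzero contribution. Until these computations are done, the proposal proves neither constancy at the bad places nor the value of the sum, so it does not yet establish the obstruction.

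At good places, your direct parity argument is a legitimate substitute for the paper's appeal to good reduction and constancy of the evaluation map. One caution: at the place above $67$ where $1+2\sqrt{17}$ is a uniformizer, the leading coefficient of $g$ is not a unit. There the representative $(5,g/x^2)$ does not give even valuation for large $x$. Handle that case via the evenness of $v_w(f(x))$, or via the representative $(5,\bar g(x)(x^2+1))$.
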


 \subsection{Notation} Let $k$ be a field and let $f \in k[x]$ be irreducible. Then $(f)$ is a maximal ideal of $k[x]$, and the residue field at $f$ is
$\kappa(f) = k[x]/(f)$,
a field extension of $k$ generated by a root of $f$.
Let $G$ be a group and let $H \subset G$ be a subgroup. The normaliser of $H$ in $G$, denoted by $N_G(H)$.
    For $X$ a smooth, projective, geometrically integral variety over a number field $k$, we write $\Br X=\operatorname{H}^2_{\text{\rm \'et}}(X,\mathbb{G}_m)$. If $f_Y : Y \to \mathrm{Spec}(k)$ is a $k$-scheme, then $\mathrm{Br}_0(Y) \subset \mathrm{Br}(Y)$ is the image of the pullback map $f_Y^* : \mathrm{Br}(k) \to \mathrm{Br}(Y)$. We define $\overline{\Br}X:=\Br X/\Br_0X$.
    For a global field $k$, we use $\Omega_k$ to denote the set of places of $k$. For a place $v \in \Omega_k$ we use $k_v$ to denote the corresponding completion and for a $k$-scheme $Y$ we set $Y_v := Y_{k_v}$. We use $\mathbb{A}_k$ to denote the adele ring of $k$. For a subgroup $B \subset \mathrm{Br}(Y)$, $Y(\mathbb{A}_k)^B \subset Y(\mathbb{A}_k)$ denotes the set of adelic points orthogonal to $B$, i.e.,
\[
Y(\mathbb{A}_k)^B
=
\{(y_v) \in Y(\mathbb{A}_k) :
\forall \beta \in B,\;
\sum_{v \in \Omega_k} \mathrm{inv}_v\bigl(\beta(y_v)\bigr) = 0\}.
\]
We define $Y(\mathbb{A}_k)^{\mathrm{Br}} := Y(\mathbb{A}_k)^{\mathrm{Br}(Y)}.$ For $a,b \in k_v^\times$, we use $(a,b)_v$ to denote the local Hilbert symbol, and $(a,b)$ to denote the corresponding quaternion algebra class in $\mathrm{Br}(k)$. For a finite field extension $K/k$, $\Nm_{K/k}$ denotes the field norm map from $K$ to $k$.

\smallskip

\begin{acknowledgements}
The authors would like to thank Brendan Creutz for his continuous support during the preparation of this work and for suggesting Theorem~\ref{Conic bundle BM}. They are also grateful to Bianca Viray for helpful comments on an earlier draft. Computational support from Magma is gratefully acknowledged. Finally, the authors acknowledge the support and hospitality of the Lodha Mathematical Sciences Institute (LMSI) during the thematic programme “Rational Points, Algebraic Cycles, and the Local--Global Principle,” where several fruitful discussions related to this work took place. M. Biswas was partially supported by UC Doctoral Scholarship (UCID: 96651612). D. C-Ramachandran was partially supported by DST-INSPIRE Fellowship (Reg No: IF210208). B. Samanta was partially supported by an institute postdoctoral fellowship from IISER Mohali.
\end{acknowledgements}

%%%%%%%%%%%%%%%%%%%%%%%%%%%%%%%%%%%%%%%%%%%%%%%%%%%%%%%%%%%%%%%%%%%%%%%%%%%%%%%%%%%%%%%%%%%%%%%%%%%%%%%%%%%%%%%%%
\section{Good Reduction of Normic Bundles}
In this section, we determine the places of good reduction of the smooth proper compactification of a normic bundle. The main result is Theorem~\ref{places of good reduction for normic bundles}. We begin by recalling some standard facts on norms, traces, and integral normal bases.
\begin{definition}
Let \(B/A\) be a finite free ring extension. The norm \(\Nm_{B/A}:B\to A\) and trace \(\Tr_{B/A}:B\to A\) are defined as the determinant and trace of the \(A\)-linear endomorphism \(x\mapsto bx\).
\end{definition}

It follows immediately from the definition that \(\Nm_{B/A}\) is multiplicative and
\(\Tr_{B/A}\) is additive. Thus, we have group homomorphisms
\(\Nm_{B/A}: B^\times \to A^\times\) and
\(\Tr_{B/A}: B \to A\). Moreover, if \(B_1/A\) and \(B_2/A\) are two ring extensions that are free
\(A\)-modules of finite rank, then
\(\Nm_{B_1\times B_2/A}(x)
= \Nm_{B_1/A}(x_1)\Nm_{B_2/A}(x_2)\)
and
\(\Tr_{B_1\times B_2/A}(x)
= \Tr_{B_1/A}(x_1)+\Tr_{B_2/A}(x_2)\)
for every \(x=(x_1,x_2)\in B_1\times B_2\).

\begin{lemma}\label{normal integral basis}
Let $A$ be a discrete valuation ring with fraction field $k$, maximal ideal
$\mathfrak p$, and residue field $\kappa$. Let $K/k$ be a finite Galois
extension with Galois group $G$, and let $B$ be the integral closure of $A$ in
$K$. Assume that the extension of discrete valuation rings $B/A$ is
unramified (equivalently, $\mathfrak p$ is unramified in $K$). Then
$B\cong A[G]$ as $A[G]$-modules. In particular, $B$ admits an integral normal
basis over $A$.
\end{lemma}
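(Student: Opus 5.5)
We will prove that \(B\) is a free \(A[G]\)-module of rank one by reducing to the residue field and lifting. Here \(B\) denotes the integral closure of \(A\) in \(K\); it is a semilocal Dedekind domain, not necessarily a DVR.

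First, \(B\) is a finitely generated torsion-free \(A\)-module, hence free of rank \(n=|G|\). This uses that \(A\) is a DVR, which is a PID. Finiteness of \(B\) over \(A\) holds because \(K/k\) is separable, so \(B\) lies in the dual of an \(A\)-lattice under the nondegenerate trace form.

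Next, reduce modulo \(\mathfrak p\). Since \(\mathfrak p\) is unramified in \(K\), we have \(\bar B:=B/\mathfrak pB\cong\prod_{\mathfrak P\mid\mathfrak p}\kappa(\mathfrak P)\), where each \(\kappa(\mathfrak P)/\kappa\) is separable of degree \(f\). The group \(G\) acts transitively on the primes \(\mathfrak P\). The decomposition group \(D_{\mathfrak P}\) surjects onto \(\operatorname{Gal}(\kappa(\mathfrak P)/\kappa)\). Unramifiedness makes this surjection injective, so \(\kappa(\mathfrak P)/\kappa\) is Galois with group \(D_{\mathfrak P}\). Therefore
\[
\bar B\cong \operatorname{Ind}_{D_{\mathfrak P}}^{G}\kappa(\mathfrak P).
\]
By the normal basis theorem over \(\kappa\) (valid for finite fields too), \(\kappa(\mathfrak P)\cong\kappa[D_{\mathfrak P}]\). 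Inducing up gives \(\bar B\cong\kappa[G]\) as \(\kappa[G]\)-modules.

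Now lift. Pick \(\bar\theta\in\bar B\) generating \(\bar B\) freely over \(\kappa[G]\), and lift it to \(\theta\in B\). Consider the \(A\)-linear map
\[
\phi:A[G]\to B,\qquad \sum a_g g\mapsto \sum a_g\, g(\theta).
\]
This is a map between free \(A\)-modules of the same rank \(n\). Its reduction \(\phi\otimes\kappa\) is the isomorphism \(\kappa[G]\to\bar B\). So \(\phi\) is surjective by Nakayama, since \(B\) is finite over the local ring \(A\). Equivalently, \(\det\phi\in A^\times\), so \(\phi\) is an isomorphism of \(A[G]\)-modules. Hence \(\{g(\theta)\}_{g\in G}\) is an integral normal basis.

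The main obstacle is the middle step, identifying \(\bar B\) as a free \(\kappa[G]\)-module. This needs care when \(\mathfrak p\) does not stay prime, i.e. \(D_{\mathfrak P}\neq G\). The induction argument above handles it: \(G\) permutes the factors \(\kappa(\mathfrak P)\) transitively, and the stabilizer of a factor is \(D_{\mathfrak P}\). Unramifiedness is used twice: to get \(\bar B\) reduced, so that it is a product of fields, and to get \(D_{\mathfrak P}\cong\operatorname{Gal}(\kappa(\mathfrak P)/\kappa)\).
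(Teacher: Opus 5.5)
Your proposal is correct and follows essentially the same route as the paper: reduce modulo $\mathfrak p$, take a generator of $B/\mathfrak pB$ as a free $\kappa[G]$-module, lift it, apply Nakayama to get surjectivity of $A[G]\to B$, and conclude by comparing ranks of free $A$-modules. The only difference is that you justify why $B/\mathfrak pB\cong\kappa[G]$ when $\mathfrak p$ splits, via $B/\mathfrak pB\cong\operatorname{Ind}_{D_{\mathfrak P}}^{G}\kappa(\mathfrak P)$ and the field normal basis theorem for $\kappa(\mathfrak P)/\kappa$; the paper covers this step only by calling $B/\mathfrak pB$ a Galois algebra and citing the normal basis theorem for finite Galois extensions.
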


\begin{proof}
Since $\p$ is unramified in $K$, the residue ring $B/\mathfrak{p}B$ is a Galois algebra over $\kappa$ with group $G$. By the normal basis theorem for finite Galois extensions, there exists an element $\bar{\alpha}\in B/\mathfrak{p}B$ whose $G$-orbit forms a $\kappa$-basis. Equivalently, $B/\mathfrak{p}B \cong \kappa[G]\bar{\alpha}$.

Choose any lift $\alpha\in B$, and let $M=A[G]\alpha$. Since the image of $M$ modulo $\mathfrak{p}B$ is all of $B/\mathfrak{p}B$, we have $M+\mathfrak{p}B=B$. By Nakayama's Lemma, it follows that $M=B$.

Now $B$ is a free $A$-module of rank $|G|$, which is also the rank of the group ring $A[G]$. Therefore the surjective $A[G]$-module homomorphism $A[G]\to B$, given by $x\mapsto x\alpha$, is an isomorphism. Hence $B\cong A[G]$ as $A[G]$-modules, so $\alpha$ generates an integral normal basis of $B$ over $A$.
\end{proof}

\begin{lemma}\label{Trace}
If $B=A[G]\alpha$, then $\operatorname{Tr}_{K/k}(\alpha)\in A^\times$.
\end{lemma}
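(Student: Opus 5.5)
The approach is to reduce modulo \(\mathfrak p\) and use that an integral normal basis generator has nonzero trace over the residue field. By hypothesis \(B=A[G]\alpha\). Since \(\Tr_{K/k}(\alpha)=\sum_{\sigma\in G}\sigma(\alpha)\) lies in \(K^G\) and is integral over \(A\), it lies in \(B\cap k=A\). So it suffices to show \(\Tr_{K/k}(\alpha)\notin\mathfrak p\).

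Suppose, for contradiction, that \(t:=\Tr_{K/k}(\alpha)\in\mathfrak p\). Consider the norm element \(N=\sum_{\sigma\in G}\sigma\in A[G]\). Since \(B=A[G]\alpha\), every \(b\in B\) can be written \(b=x\alpha\) with \(x\in A[G]\). Because \(N\) is central in \(A[G]\), we get
\[
\Tr_{B/A}(b)=N(x\alpha)=x\,(N\alpha)=x\cdot t=\varepsilon(x)\,t,
\]
where \(\varepsilon\colon A[G]\to A\) is the augmentation. The last equality holds because \(G\) acts trivially on \(t\in A\). Hence \(\Tr_{B/A}(B)\subseteq tA\subseteq\mathfrak p\).

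This contradicts unramifiedness. Since \(\mathfrak p\) is unramified, \(B/\mathfrak pB\) is a finite étale \(\kappa\)-algebra, namely a product of separable field extensions of \(\kappa\). Therefore its trace form is nondegenerate, and in particular \(\Tr_{(B/\mathfrak pB)/\kappa}\) is not identically zero. Trace is compatible with reduction modulo \(\mathfrak p\), because the matrix of multiplication by \(b\) in an \(A\)-basis of \(B\) reduces to that of \(\bar b\). It follows that \(\Tr_{B/A}(B)\not\subseteq\mathfrak p\), which is the desired contradiction. So \(t\in A^\times\).

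An alternative route works directly mod \(\mathfrak p\). The reduction \(\bar\alpha\) generates \(\kappa[G]\cong B/\mathfrak pB\). If \(\bar t=0\), the image of \(N\) would be \(N\cdot\kappa[G]\bar\alpha=\kappa\bar t=0\), so the trace map on \(B/\mathfrak pB\) would vanish, again contradicting étaleness.

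The only point needing care is the claim that the trace of \(B/\mathfrak pB\) over \(\kappa\) is nonzero. This is where separability of the residue extensions, i.e.\ unramifiedness, enters. I would cite the standard characterization that a finite \(\kappa\)-algebra is étale exactly when its trace form is nondegenerate.
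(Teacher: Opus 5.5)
Your argument is correct, but it takes a different route from the paper.

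\textbf{The paper's route.} The paper never uses unramifiedness in this lemma. It expands $1=\sum_{\sigma\in G}c_\sigma\,\sigma(\alpha)$ in the $A$-basis $\{\sigma(\alpha)\}_{\sigma\in G}$ and applies $\tau\in G$. Uniqueness of the coefficients then gives $c_{\tau^{-1}\sigma}=c_\sigma$, so every $c_\sigma$ equals a single $c\in A$. Hence $1=c\,\Tr_{K/k}(\alpha)$, and $c$ is an explicit inverse of $t$. This uses only that $\alpha$ generates $B$ \emph{freely} over $A[G]$. Uniqueness of the expansion is the key point: taking traces of $1=\sum c_\sigma\sigma(\alpha)$ would only give $|G|=(\sum c_\sigma)\,t$, which says nothing when $\mathfrak p\mid |G|$. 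So the paper's argument holds for any normal integral basis, whatever the ramification.

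\textbf{Your route.} You first prove the identity $\Tr_{B/A}(B)=tA$, using that the norm element is central and the augmentation map. You then bring in the ambient hypothesis of Lemma~\ref{normal integral basis} that $\mathfrak p$ is unramified. Nondegeneracy of the trace form of the \'etale algebra $B/\mathfrak pB$ gives $\Tr_{B/A}(B)\not\subseteq\mathfrak p$. Your proof needs only that $A[G]\alpha=B$ as a generation statement, not freeness. The cost is that it depends on the unramified hypothesis and on the standard \'etale/trace-form criterion. The gain is that it shows $t$ is a unit exactly when the trace $B\to A$ is surjective, which is the trace-surjectivity (tameness) phenomenon underlying the lemma.

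The paper's proof is more elementary and self-contained; yours is more conceptual but uses strictly more input.
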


\begin{proof}
Since $\{\sigma(\alpha)\}_{\sigma\in G}$ forms an $A$-basis of $B$, we may write
$1=\sum_{\sigma\in G}c_\sigma\,\sigma(\alpha)$
for uniquely determined coefficients $c_\sigma\in A$.

Applying any $\tau\in G$ to both sides gives
$1=\sum_{\sigma\in G}c_\sigma\,\tau\sigma(\alpha)$.
Replacing $\sigma$ by $\tau^{-1}\sigma$, we obtain
$1=\sum_{\sigma\in G}c_{\tau^{-1}\sigma}\,\sigma(\alpha)$.
By the uniqueness of the basis expansion, $c_{\tau^{-1}\sigma}=c_\sigma$ for every $\sigma,\tau\in G$. Since $G$ acts transitively on itself by left multiplication, all coefficients are equal. Thus there exists $c\in A$ such that $c_\sigma=c$ for every $\sigma\in G$. Consequently,
$1=c\sum_{\sigma\in G}\sigma(\alpha)=c\,\operatorname{Tr}_{K/k}(\alpha)$.
Hence $\operatorname{Tr}_{K/k}(\alpha)=c^{-1}\in A^\times$, showing that the trace of $\alpha$ is a unit of $A$.
\end{proof}
\begin{definition}
Let \(R\) be a discrete valuation ring with fraction field \(K\), and let
\(X\) be a smooth proper \(K\)-variety. An \(R\)-model of \(X\) is an \(R\)-scheme
\(\mathcal X\) together with an isomorphism \(\mathcal X\times_RK\cong X\). We say that \(X\) has \emph{good reduction} if it admits a smooth proper
\(R\)-model.
\end{definition}

Let $K/k$ be a finite separable field extension of degree $n$, and let
$P(x) \in \mathcal{O}_k[x]$ be a separable polynomial of degree $dn$, where
$d$ is a positive integer. Consider the affine norm hypersurface
\[
X_0: \Nm_{K/k}(\vec{z}) = P(x)
\]
in $\mathbb{A}^{n+1}_k$. Following the construction in \cite[Section~2]{AVB}, let $X$ be the smooth proper model of $X_0$.

\begin{theorem}\label{places of good reduction for normic bundles}
    Let $X \to \mathbb{P}^1_k$ be the smooth proper compactification of the affine norm hypersurface $X_0$, defined by $\Nm_{K/k}(\vec{z})=P(x)$, constructed in \cite[Theorem~1.1]{AB}, where $K/k$ is a cyclic extension of degree $p$ and $P(x)\in \mathcal{O}_k[x]$ is a polynomial of degree $mp$. Let $S=\{\mathfrak{p}\in \mathcal{O}_k : v_{\mathfrak{p}}(\Delta_K\Delta(P))\neq 0\}$. Then $X$ has good reduction at every prime $\mathfrak{p}\notin S$.
\end{theorem}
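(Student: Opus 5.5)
The plan is to spread the construction of \cite[Theorem~1.1]{AB} (following \cite[Section~2]{AVB}) over the ring $R=\mathcal{O}_{k,\mathfrak p}$ for a fixed prime $\mathfrak p\notin S$, and to check that every chart is smooth over $R$. Since $\mathfrak p\nmid\Delta_K$, the prime $\mathfrak p$ is unramified in $K$. Lemma~\ref{normal integral basis} then gives an integral normal basis $\{\sigma(\alpha)\}_{\sigma\in G}$ of $B=\mathcal O_K\otimes R$, and by Lemma~\ref{Trace} we have $\Tr_{K/k}(\alpha)\in R^\times$. Writing $\vec z=\sum_\sigma z_\sigma\sigma(\alpha)$, the norm form $\Nm_{K/k}(\vec z)$ becomes a form $N(\vec z)\in R[z_\sigma]$. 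Its reduction modulo $\mathfrak p$ is the norm form of the étale $\kappa$-algebra $B/\mathfrak pB$, so over $\bar\kappa$ it factors as a product of $p$ linearly independent linear forms. In particular the projective hypersurface $\{N=0\}$ has the same ``product of independent hyperplanes'' shape in both fibres, and the singular locus of the affine piece $N(\vec z)=c$ with $c$ a unit is empty over $R$.

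Next I would write down the model. Over $\mathbb A^1_R$, take $\mathcal X_0:N(\vec z)=P(x)$. Over $\infty$, take $\mathcal X_\infty:N(\vec w)=t^{mp}P(1/t)=\widetilde P(t)$ with $\vec z=t^{-m}\vec w$, glued along $t\neq0$. Then compactify fibrewise as in \cite{AB}: take the closure in $\mathbb P(\mathcal O\oplus\mathcal O(m)^{p})$-type bundles and blow up or resolve along the loci described there. The point is that the construction of \cite{AB} uses only (i) the factorisation of $N$ into distinct linear forms over a splitting algebra, and (ii) the fact that $P\widetilde P$ has simple roots. Since $\mathfrak p\nmid\Delta(P)$, the reduction $\bar P$ is separable of the same degree $mp$. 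The leading coefficient is then a unit, because $\Delta(P)$ is divisible by a power of the leading coefficient, so $\bar{\widetilde P}(0)\neq0$ and $\infty$ remains a smooth fibre.

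Then I would verify smoothness of the total space by the Jacobian criterion, chart by chart. On $\mathcal X_0$, a point of the special fibre is singular only if $\partial N/\partial z_\sigma=0$ for all $\sigma$ and $P'(x)=0$. The Euler relation gives $p\,N=\sum z_\sigma\partial N/\partial z_\sigma$; since $p$ may not be invertible, I would instead work over the étale cover $R'$ splitting $K$, where $N=\prod_i\ell_i$ with the $\ell_i$ forming a basis. There vanishing of all partials forces at least two $\ell_i=0$, hence $N=0$, so $P(x)=P'(x)=0$ modulo $\mathfrak p$. This contradicts separability of $\bar P$. Smoothness is étale-local, so descending from $R'$ is harmless. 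The boundary charts added by the compactification are handled the same way: they are toric-type charts of the form $\prod u_i=P(x)\cdot(\text{unit})$, or charts where the equation has a unit, and the same Jacobian argument applies. Properness comes from the construction being projective over $\mathbb P^1_R$ and the generic fibre being the $X$ of \cite{AB}.

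I expect the main obstacle to be the compactification step. I have to make sure the explicit resolution of \cite{AB} really is defined over $R$ and not just over $k$, which means every blow-up centre must be flat over $R$ with smooth reduction. This is where the normal integral basis matters: it guarantees that the linear forms $\ell_i$ and the Galois action are defined integrally. I would first try to show that, étale-locally over $R'$, the whole construction is the base change from $\mathbb Z[\text{coefficients}]$ of a toric construction, which is smooth over $\mathbb Z$. It then remains only to control the fibres over roots of $P$, which is done by the separability of $\bar P$.
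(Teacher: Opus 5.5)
Your plan is essentially the paper's proof. You spread the construction of \cite{AB} over $\mathcal O_{k,\mathfrak p}$, use the integral normal basis and the unit trace (Lemmas~\ref{normal integral basis} and~\ref{Trace}) so that the descent from the split situation works integrally, and get smoothness of the toric charts from smoothness of $V(P)$ over $\mathcal O_{k,\mathfrak p}$; your Jacobian check is just a reproof of \cite[Corollary~3.4]{AB}. You then glue with the chart at infinity, as the paper does.

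There is one slip. $\Delta(P)$ is not divisible by the leading coefficient (think of $b^2-4ac$, e.g.\ $P=5x^2+x+1$ at $\mathfrak p=5$), so $\bar P$ can drop degree and $\bar{\widetilde P}(0)$ can vanish. This is harmless: a unit discriminant still makes $\bar{\widetilde P}$ separable, with a simple root at $t=0$, and that is all your Jacobian argument at infinity needs.
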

\begin{proof}
Fix a prime \(\mathfrak p\notin S\), and let
\(\mathcal{O}_{k,\mathfrak p}\) be the localization of
\(\mathcal{O}_k\) at \(\mathfrak p\). Let
\(\mathcal{O}_{K,\mathfrak p}\) denote the integral closure of
\(\mathcal{O}_{k,\mathfrak p}\) in \(K\). Consider the affine
\(\mathcal{O}_{k,\mathfrak p}\)-scheme
\[
\mathcal{X}_{0}^{\mathfrak p}:
\Nm_{\mathcal{O}_{K,\mathfrak p}/
\mathcal{O}_{k,\mathfrak p}}(\mathbf z)=P(x).
\]
Since \(\mathfrak p\nmid\Delta_K\), the extension
\(\mathcal{O}_{K,\mathfrak p}/\mathcal{O}_{k,\mathfrak p}\) is unramified. Hence, by Lemma~\ref{normal integral basis},
\(\mathcal{O}_{K,\mathfrak p}\) admits a normal integral basis over
\(\mathcal{O}_{k,\mathfrak p}\), which in particular gives a basis of \(K\) over \(k\). Consequently, the generic fibre of
\(\mathcal{X}_{0}^{\mathfrak p}\) is the original affine normic bundle, \(\mathcal{X}_{0}^{\mathfrak p}\times_{\mathcal{O}_{k,\mathfrak p}}
\operatorname{Spec}k
\cong
X_0\).
The natural projection
\[
\mathcal{X}_{0}^{\mathfrak p}\longrightarrow
\mathbb{A}^{1}_{\mathcal{O}_{k,\mathfrak p}},
\quad
(\mathbf z,x)\longmapsto x,
\]
allows us to apply the construction of \(Y_a\) from
\cite[Section~3]{AB} over the ring
\(\mathcal{O}_{k,\mathfrak p}[x]\), taking \(a=P(x)\).
Since \(\mathfrak p\nmid\Delta(P)\), the divisor
\(V(P(x))\) is smooth over
\(\operatorname{Spec}\mathcal{O}_{k,\mathfrak p}\). Therefore,
\cite[Corollary~3.4]{AB} shows that the corresponding scheme
\(Y_{P(x)}\) is smooth over
\(\operatorname{Spec}\mathcal{O}_{k,\mathfrak p}\).

Next, we carry out the construction of
\cite[Section~4]{AB} with the extension
\(\mathcal{O}_{K,\mathfrak p}/
\mathcal{O}_{k,\mathfrak p}\) in place of \(K/k\).
Since \(\mathfrak p\nmid\Delta_K\),
Lemma~\ref{normal integral basis} provides a normal integral basis of
\(\mathcal{O}_{K,\mathfrak p}\) over
\(\mathcal{O}_{k,\mathfrak p}\). Together with
Lemma~\ref{Trace}, this yields the
\(\mathcal{O}_{K,\mathfrak p}\)-algebra isomorphism
\[
\psi:
\operatorname{Proj}
\mathcal{O}_{K,\mathfrak p}[\{z_v:v\in\mathcal{V}_n\}]
\longrightarrow
\operatorname{Proj}
\mathcal{O}_{K,\mathfrak p}[\{y_v:v\in\mathcal{V}_n\}]
\]
constructed in \cite{AB}. The remaining construction proceeds exactly as in \cite[Section~4]{AB} by faithfully flat descent.

Finally, the compactification argument of
\cite[Lemma~5.1, Proposition~5.2]{AB}
extends over
\(\mathcal{O}_{k,\mathfrak p}\). Gluing
\(Y_{P(x)}\) and
\(Y_{P(1/x')x'^{pd}}\) over
\(\operatorname{Spec}\mathcal{O}_{k,\mathfrak p}[x^{\pm1}]\) and
\(\operatorname{Spec}\mathcal{O}_{k,\mathfrak p}[x'^{\pm1}]\)
produces a smooth proper
\(\mathcal{O}_{k,\mathfrak p}\)-scheme
\(\mathcal X\) whose generic fibre is \(X\), that is,
\[
\mathcal X\times_{\mathcal{O}_{k,\mathfrak p}}
\operatorname{Spec}k
\cong
X.
\]

Thus \(X\) admits a smooth proper model over
\(\mathcal{O}_{k,\mathfrak p}\) for every
\(\mathfrak p\notin S\). Hence \(X\) has good reduction at every prime outside \(S\).
\end{proof}

%%%%%%%%%%%%%%%%%%%%%%%%%%%%%%%%%%%%%%%%%%%%%%%%%%%%%%%%%%%%%%%%%%%%%%%%%%%%%%%%%%%%%%%%%%%%%%%%%%%%%%%%%%%%%%%%%%%%%%%%%%%%%%%%%%%%%%%%
\section{Auxiliary Results}
%%%%%%%%%%%%%%%%%%%%%%%%%%%%%%%%%%%%%%%%%%%%%%%%%%%%%%%%%%%%%%%%%%%%%%%%%%%%%%%%%%%%%%%%%%%%%%%%%%%%%%%%%%%%%%%%%%%%%%%%%%%%%%%%%%%%%%%%
In this section, we record several group-theoretic and field-theoretic lemmas that streamline the proof of Theorem~\ref{main-vanishing}.
%%%%%%%%%%%%%%%%%%%%%%%%%%%%%%%%%%%%%%%%%%%%%%%%%%%%%%%%%%%%%%%%%%%%%%%%%%%%%%%%%%%%%%%%%%%%%%%%%
\subsection{Group-theoretic results}
%%%%%%%%%%%%%%%%%%%%%%%%%%%%%%%%%%%%%%%%%%%%%%%%%%%%%%%%%%%%%%%%%%%%%%%%%%%%%%%%%%%%%%%%%%%%%%%%%%%

\begin{lemma}[Frattini]\label{Frattini's argument}
Let $G$ be a finite group and $H$ be a normal subgroup of $G$. If $P$ is a Sylow $p$-subgroup of $H$, then $G=N_G(P)\,H$.
\end{lemma}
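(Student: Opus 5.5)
The plan is to use the conjugation action of $G$ on the Sylow $p$-subgroups of $H$ and to combine Sylow's conjugacy theorem with the orbit–stabiliser correspondence. The inclusion $N_G(P)\,H\subseteq G$ is trivial, so everything reduces to showing that an arbitrary element $g\in G$ can be written as an element of $N_G(P)$ times an element of $H$.

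First, fix $g\in G$ and consider the conjugate $gPg^{-1}$. Since $H$ is normal in $G$, we have $gPg^{-1}\subseteq gHg^{-1}=H$. Moreover, $|gPg^{-1}|=|P|$, so $gPg^{-1}$ is again a Sylow $p$-subgroup of $H$.

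Second, apply Sylow's second theorem inside the finite group $H$: any two Sylow $p$-subgroups of $H$ are conjugate by an element of $H$. Hence there is some $h\in H$ with $gPg^{-1}=hPh^{-1}$. Rearranging gives $(h^{-1}g)P(h^{-1}g)^{-1}=P$, so $h^{-1}g\in N_G(P)$.

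Finally, write $g=h\,(h^{-1}g)\in H\,N_G(P)$. This shows $G=H\,N_G(P)$. Because $H$ is normal, $H\,N_G(P)=N_G(P)\,H$ as sets, and indeed this product is a subgroup; alternatively, one can run the same argument with $g^{-1}$ in place of $g$ and take inverses. This gives the claimed equality $G=N_G(P)\,H$.

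There is no real obstacle here. The only points needing care are that normality of $H$ is what places $gPg^{-1}$ inside $H$, so that Sylow conjugacy within $H$ (rather than within $G$) can be applied, and the order in which the two factors appear in the final product.
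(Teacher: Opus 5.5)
Your proof is correct and is the standard Frattini argument: normality of $H$ puts $gPg^{-1}$ inside $H$, Sylow conjugacy in $H$ gives $h^{-1}g\in N_G(P)$, and normality of $H$ again lets you swap the two factors. The paper does not prove this lemma itself; it only cites a standard group-theory text, where the same argument appears.
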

\begin{proof}
See \cite[Theorem~3.1.4]{Zas}.   
\end{proof}

\begin{theorem}[Schur--Zassenhaus]\label{Schur--Zassenhaus}
Let $|G|=ab$ with $(a,b)=1$. If $G$ has a normal subgroup $H$ of order $a$, then $G$ has a subgroup $S$ of order $b$, and $G=HS\cong H\rtimes S$.
\end{theorem}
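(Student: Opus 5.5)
The plan is to prove the Schur--Zassenhaus theorem by strong induction on $|G|$. We split into the cases $H$ abelian and $H$ non-abelian, using the Frattini argument (Lemma~\ref{Frattini's argument}) to reduce the general case to the abelian one. Throughout, note that once a subgroup $S$ of order $b$ exists, $H\cap S=1$ because $(a,b)=1$. Hence $|HS|=ab=|G|$, so $G=HS$, and since $H$ is normal this is an internal semidirect product $H\rtimes S$. So the whole task is the existence of a complement $S$.

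\emph{Reduction step.} Suppose $H$ is not a $p$-group for a single prime, or more generally that $H$ has a proper nontrivial subgroup which is characteristic in $H$.

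First take a prime $p\mid a$ and a Sylow $p$-subgroup $P$ of $H$. Since $p\nmid b$, $P$ is also a Sylow $p$-subgroup of $G$. By Frattini, $G=N_G(P)H$, so $N_G(P)/N_H(P)\cong G/H$ has order $b$. If $N_G(P)\neq G$, then $N_G(P)$ is a smaller group with normal subgroup $N_H(P)$ of order coprime to its index $b$. By induction it contains a subgroup of order $b$, and we are done.

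Otherwise $P\trianglelefteq G$. If $P\neq H$, consider $\bar G=G/P$ with normal subgroup $H/P$ of order $a/|P|$, coprime to $b$. Induction gives $T/P\le G/P$ of order $b$. Then $T$ has order $|P|b<|G|$ with normal Sylow subgroup $P$, so induction applied to $T$ gives a subgroup of order $b$.

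So we reduce to the case where $H$ is a $p$-group that is minimal normal in the relevant sense. Replacing $P$ by $Z(P)$ and using the same quotient trick, we may assume $H$ is abelian. Indeed $Z(H)$ is characteristic in $H$, hence normal in $G$; if $Z(H)\ne H$, pass to $G/Z(H)$ and then to the preimage, exactly as above.

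\emph{Abelian case.} This is where the real content lies, and I expect it to be the main obstacle. Let $H$ be abelian, written additively, with $Q=G/H$ of order $b$. Choose a transversal $\{t_q\}_{q\in Q}$ and set $h(q,r)=t_qt_rt_{qr}^{-1}\in H$. This is a $2$-cocycle for the conjugation action of $Q$ on $H$. Define
\[
u_r=\sum_{q\in Q} h(q,r).
\]
Using the cocycle identity, one checks that $b\,h(q,r)$ is a coboundary built from the $u$'s. Since multiplication by $b$ is invertible on $H$ (because $\gcd(a,b)=1$), we can define $s_r=(b^{-1}u_r)^{-1}t_r$, adjusted appropriately. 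The cocycle identity then shows that $\{s_r\}$ is closed under multiplication, so it forms a subgroup of order $b$. Equivalently, $H^2(Q,H)$ is killed by both $b$ and $a$, hence vanishes, so the extension splits.

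The sign and convention bookkeeping in this averaging argument is the delicate part. Alternatively, I could simply cite \cite{Zas} for the theorem, as was done for the Frattini lemma.
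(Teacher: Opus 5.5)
Your proposal is correct, but it takes a different route from the paper. The paper does not prove the theorem at all; it simply cites \cite[Theorem~6.2.1]{Zas}. You instead give the standard self-contained existence proof:
\begin{itemize}
\item strong induction on $|G|$;
\item the Frattini argument to pass to $N_G(P)$ when a Sylow subgroup $P$ of $H$ is not normal in $G$;
\item quotienting by $P$ (resp.\ by $Z(H)$) and pulling back to a group $T$ of order $|P|b$ (resp.\ $|Z(H)|b$), which reduces to the case of abelian $H$;
\item the vanishing of $H^2(G/H,H)$, since it is killed by both $b$ and $a$.
\end{itemize}
The reduction steps check out, including $N_G(P)/N_H(P)\cong G/H$ and the strict inequalities $|T|<|G|$ needed for the induction. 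You rightly prove only the existence of a complement, which is all the statement claims, so no appeal to Feit--Thompson for conjugacy is needed. The citation buys brevity. Your argument buys independence from the reference, and it shows that the paper's only application needs just part of it. That application is Lemma~\ref{field-theory lemma}, where the normal subgroup is a Sylow $p$-subgroup of $N_G(P)$. There only your $p$-group/centre step and the abelian case are required; the Frattini step is vacuous.

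One bookkeeping point in the abelian case, which you already flagged, needs fixing. Write ${}^{q}x=t_qxt_q^{-1}$ and use your convention $t_qt_r=h(q,r)t_{qr}$. Associativity then gives ${}^{q}h(r,s)+h(q,rs)=h(q,r)+h(qr,s)$. Summing over the \emph{first} variable, as you wrote, produces the term $\sum_q {}^{q}h(r,s)$ rather than $b\,h(r,s)$, so it does not exhibit $b\,h$ as a coboundary. Sum over the last variable instead:
\begin{itemize}
\item set $v_q=\sum_{s\in Q}h(q,s)$, which gives $b\,h(q,r)={}^{q}v_r-v_{qr}+v_q$;
\item set $w_q=b^{-1}v_q$ and, multiplicatively, $s_q=w_q^{-1}t_q$;
\item then $s_qs_r=s_{qr}$, so the $s_q$ form a complement of order $b$.
\end{itemize}
Alternatively, you can keep your sum over the first variable if you switch to the convention $t_qt_r=t_{qr}h(q,r)$ and put $s_r=t_rw_r^{-1}$ with $w_r=b^{-1}\sum_q h(q,r)$.
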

\begin{proof}
    See \cite[Theorem~6.2.1]{Zas}
\end{proof}
\begin{lemma}[Dedekind's Modular Law]\label{Group theory lemma for compositum}
     Let $G$ be a group with subgroups $A, B$, and $C$ such that $A \leq C$. Then, as sets, $(B \cap C)A = BA \cap C$. Furthermore, if $B$ is normal in $G$, both sides form identical valid subgroups of $G$.
\end{lemma}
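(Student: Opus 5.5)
We prove the two assertions separately: first the set equality, which is pure element chasing, and then, assuming \(B\) is normal, that both sides are subgroups.

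\emph{Inclusion \((B\cap C)A\subseteq BA\cap C\).} Take \(x=ba\) with \(b\in B\cap C\) and \(a\in A\). Then \(x\in BA\) since \(b\in B\). Also \(b\in C\) and \(a\in A\le C\), so \(x=ba\in C\) because \(C\) is closed under multiplication.

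\emph{Inclusion \(BA\cap C\subseteq (B\cap C)A\).} Take \(x\in BA\cap C\) and write \(x=ba\) with \(b\in B\) and \(a\in A\). Then \(b=xa^{-1}\). Since \(x\in C\) and \(a^{-1}\in A\le C\), we get \(b\in C\), hence \(b\in B\cap C\). Therefore \(x=ba\in(B\cap C)A\).

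\emph{Subgroup structure when \(B\trianglelefteq G\).} The standard fact is that \(NH\) is a subgroup whenever \(N\) is normal and \(H\) is a subgroup, because then \(NH=HN\).
\begin{enumerate}
\item \(BA\) is a subgroup, since \(B\) is normal. Its intersection with the subgroup \(C\) is therefore a subgroup.
\item \(B\cap C\) is normal in \(C\). Indeed, for \(c\in C\) we have \(c(B\cap C)c^{-1}\subseteq cBc^{-1}\cap C=B\cap C\).
\item Since \(A\le C\), the product \((B\cap C)A\) is a product inside \(C\) of a normal subgroup of \(C\) with a subgroup of \(C\). Hence it is a subgroup of \(C\), and so of \(G\).
\end{enumerate}
By the set equality already proved, the two sides are the same subgroup.

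None of these steps presents a real obstacle. The only care needed is to use \(A\le C\) for \(a^{-1}\), not just for \(a\), in the second inclusion.
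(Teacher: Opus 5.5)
Your proof is correct, and it is the standard element-chasing proof of the modular law. The paper gives no argument of its own here, only a citation to Robinson's Lemma~1.3.14, which is proved by this same element chase; your subgroup argument ($BA$ is a subgroup because $B\trianglelefteq G$, so $BA\cap C$ is one, hence so is $(B\cap C)A$ by the set equality) supplies the detail the citation leaves implicit, and your items 2--3 are a redundant but valid second check.
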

\begin{proof}
See \cite[Lemma~1.3.14]{Robinson}.
\end{proof}

\subsection{Field-theoretic results}

\begin{lemma}\label{field-theory lemma}
Let \(L/k\) be a finite Galois extension such that \(p\mid [L:k]\). Let
\(E_1,\dots,E_r\) be Galois subfields of \(L/k\) satisfying
\(p\nmid [E_i:k]\) for each \(i\). Then there exists an intermediate field
\(R\) of \(L/k\) such that \(p\mid [R:k]\) and \(R\cap E_i=k\) for all
\(1\le i\le r\).
\end{lemma}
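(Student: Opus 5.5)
Let \(G=\Gal(L/k)\) and, for each \(i\), let \(H_i=\Gal(L/E_i)\). Since \(E_i/k\) is Galois, each \(H_i\) is normal in \(G\), and \([G:H_i]=[E_i:k]\) is prime to \(p\). Set \(H=\bigcap_{i=1}^r H_i\). It is again normal in \(G\), and it corresponds to the compositum \(E=E_1\cdots E_r\), which is Galois over \(k\). The index \([G:H]\) divides \(\prod_i [G:H_i]\), since \(G/H\) embeds in \(\prod_i G/H_i\). Hence \(p\nmid [G:H]\), and because \(p\mid |G|\) we get \(p\mid |H|\).

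Let \(P\) be a Sylow \(p\)-subgroup of \(H\). As \(p\nmid[G:H]\), \(P\) is also a Sylow \(p\)-subgroup of \(G\), and it is nontrivial. By Frattini's argument (Lemma~\ref{Frattini's argument}), \(G=N_G(P)H\). I take \(R=L^{N_G(P)}\), the fixed field of the normaliser. The required properties then become two group-theoretic facts.

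\begin{enumerate}
\item[(a)] \(R\cap E_i=k\) for each \(i\). By Galois theory this is the statement \(N_G(P)H_i=G\). Since \(H\le H_i\), we have \(G=N_G(P)H\subseteq N_G(P)H_i\subseteq G\), so (a) holds.
\item[(b)] \(p\mid [R:k]=[G:N_G(P)]\).
\end{enumerate}

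Fact (b) is the main obstacle, and it can fail. Since \(P\le N_G(P)\) and \(P\) is Sylow in \(G\), the index \([G:N_G(P)]\) is always prime to \(p\). So the naive choice \(R=L^{N_G(P)}\) has degree prime to \(p\), which is the opposite of what we want. I therefore need a subgroup \(U\le G\) with
\[
UH=G \quad\text{and}\quad p\mid[G:U].
\]
Any such \(U\) gives \(UH_i=G\) for every \(i\), so \(R=L^U\) satisfies \(R\cap E_i=k\) and \(p\mid[R:k]\). A necessary condition is that \(U\) surjects onto \(G/H\) while missing part of the \(p\)-part of \(|G|\).

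My plan for finding \(U\) is as follows. First, by Frattini it suffices to work inside \(N=N_G(P)\). We have \(N/(N\cap H)\cong G/H\), which has order prime to \(p\), and \(P\) is a normal Sylow \(p\)-subgroup of \(N\). By Schur--Zassenhaus (Theorem~\ref{Schur--Zassenhaus}), \(N\cong P\rtimes S\) with \(S\) a \(p'\)-group, \(|S|=[N:P]\). Second, \(S\) maps onto \(N/(N\cap H)\) because \(P\le N\cap H\). Indeed, \(N/P\cong S\) surjects onto \(N/(N\cap H)\cong G/H\), so \(SH\supseteq SP(N\cap H)=N(N\cap H)=N\), and hence \(SH=NH=G\). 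Third, take \(U=S\). Then
\[
[G:S]=[G:N]\,|P|,
\]
which is divisible by \(p\) since \(P\ne1\).

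So the choice is \(R=L^S\). Dedekind's modular law (Lemma~\ref{Group theory lemma for compositum}) can be used to double-check that \(SH_i=G\) is a genuine subgroup identity: it gives \((S\cap H_i)\) behaving correctly, and \(H_i\) is normal. The only step I need to recheck carefully is \(SH=G\), which relies on \(P\le H\). This holds by construction, since \(P\) was chosen inside \(H\).
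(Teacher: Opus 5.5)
Your final argument is correct and is essentially the paper's proof: take \(P\) a Sylow \(p\)-subgroup of \(G\) lying in the normal subgroup \(H\) cut out by the compositum, choose a Schur--Zassenhaus complement \(S\) with \(N_G(P)=P\rtimes S\), use Frattini together with \(P\le H\) to get \(SH=G\), and set \(R=L^S\). The abandoned first attempt with \(L^{N_G(P)}\) and the remark about Dedekind's law can simply be deleted; \(SH_i\) is a subgroup because \(H_i\) is normal.
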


\begin{proof}
Let \(G=\Gal(L/k)\), and let \(C\) be the compositum of the fields
\(E_1,\dots,E_r\). Since each extension \(E_i/k\) has degree coprime to \(p\),
we have \(p\nmid [C:k]\). Let \(H\trianglelefteq G\) be the subgroup corresponding to \(C\).

Let \(P\) be a Sylow \(p\)-subgroup of \(G\). Since \(p\nmid [G:H]\), we have
\(P\le H\). By Theorem~\ref{Schur--Zassenhaus}, there exists a subgroup
\(S\le N_G(P)\) such that \(N_G(P)=P\rtimes S\). In particular, \(P\cap S=\{1\}\), and hence
\(p\nmid |S|\). Since \(p\mid |G|\), it follows that \(p\mid [G:S]\). As \(P\le H\trianglelefteq G\), Lemma~\ref{Frattini's argument} yields
\(G=N_G(P)H\). Using \(N_G(P)=PS\) and \(P\le H\), we obtain \(G=(PS)H=SH\).

Set \(R=L^S\). Then \(p\mid [R:k]\). For each \(i\), let \(H_i\le G\) be the
subgroup corresponding to \(E_i\). Since \(E_i\subseteq C\), we have
\(H_i\supseteq H\). Therefore,
\(SH_i\supseteq SH=G\), and hence \(SH_i=G\). By the Galois correspondence,
$$
R\cap E_i
=
L^S\cap L^{H_i}
=
L^{SH_i}
=
L^G
=
k.
$$
Thus \(R\cap E_i=k\) for all \(1 \le i \le r\).
\end{proof}

\begin{lemma}\label{Field theory lemma for compositum}
    Let $K, E,$ and $L$ be extensions of a base field $k$ contained within a common algebraic closure. Assume $K/k$ is a Galois extension, and $E \subseteq L$. Then,$$KE \cap L = (K \cap L)E$$
\end{lemma}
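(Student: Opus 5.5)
The plan is to translate the statement into group theory via the Galois correspondence and then apply Lemma~\ref{Group theory lemma for compositum}. Let $M$ be a finite Galois extension of $k$, inside the common algebraic closure, containing $K$, $E$ and $L$; for instance, the Galois closure of the compositum $KEL$ over $k$. We may take $K$, $E$, $L$ finite over $k$: in the infinite case one either uses the Krull topology version of the correspondence, or reduces to finite subextensions by writing each field as a union of finitely generated subfields. Set $G=\Gal(M/k)$, and write $H_K$, $H_E$, $H_L$ for the subgroups fixing $K$, $E$, $L$ respectively.

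Because $K/k$ is Galois, $H_K$ is normal in $G$. Because $E\subseteq L$, we have $H_L\le H_E$. Under the Galois correspondence, intersections of fields correspond to the subgroups generated by the corresponding subgroups, and composita correspond to intersections of subgroups. This gives
\[
\Gal(M/KE)=H_K\cap H_E,\qquad \Gal(M/K\cap L)=\langle H_K,H_L\rangle .
\]
Since $H_K$ is normal, $\langle H_K,H_L\rangle=H_KH_L$, and likewise $\langle H_K,H_E\rangle=H_KH_E$.

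Next compute both sides. The left side $KE\cap L$ has fixing group $\langle H_K\cap H_E,\,H_L\rangle$. Since $H_K\cap H_E$ is normal in $H_E$ and $H_L\le H_E$, this subgroup equals $(H_K\cap H_E)H_L$. The right side $(K\cap L)E$ has fixing group $H_KH_L\cap H_E$. So it suffices to show
\[
(H_K\cap H_E)H_L=H_KH_L\cap H_E .
\]
This is exactly Lemma~\ref{Group theory lemma for compositum} with $A=H_L$, $B=H_K$ and $C=H_E$: the hypothesis $A\le C$ holds, and $B$ is normal, so both sides are genuine subgroups and they coincide. Applying the Galois correspondence then gives the equality of fields.

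The only delicate point is the bookkeeping. One must check that the join of two subgroups really is the product set $H_KH_L$ (which uses normality of $H_K$), and that $(H_K\cap H_E)H_L$ is a subgroup, so that it equals the join. Both facts follow from normality together with $H_L\le H_E$; the rest of the argument is formal.
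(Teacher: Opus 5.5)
Your proposal is correct and follows the paper's proof. Both pass to a finite Galois extension containing $K$, $E$, $L$, set $A=H_L$, $B=H_K$, $C=H_E$ (so $B$ is normal and $A\le C$), and apply Dedekind's modular law. Your version also spells out the bookkeeping the paper leaves implicit: the fixing groups of $KE\cap L$ and $(K\cap L)E$ are $(H_K\cap H_E)H_L$ and $H_KH_L\cap H_E$, and these joins are products because $H_K$ is normal.
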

\begin{proof}
    Let $G$ be the Galois group of a finite Galois extension containing $K, E,$ and $L$. Let $B, C,$ and $A$ be the subgroups fixing $K, E,$ and $L$, respectively. Since $K/k$ is Galois, $B \triangleleft G$. Since $E \subseteq L$, the Galois correspondence yields $A \leq C$. Using the Galois correspondence and Lemma~\ref{Group theory lemma for compositum}, the result follows.
\end{proof}

\begin{lemma}\label{base-change-factors-prime-to-p}
Let \(P(x)\in k[x]\), and let \(S=\{P_1,\dots,P_r\}\) be the set of irreducible factors of \(P(x)\) over \(k\). Let \(L/k\) be a finite extension, and suppose that each \(\kappa(P_i)/k\) is Galois. Set
$$S_1=\{P_i\in S : p\mid [\kappa(P_i)\cap L:k]\},\,
S_2=\{P_i\in S : p\nmid [\kappa(P_i)\cap L:k]\}.$$ Let
$E=\prod_{P_i\in S_1}(\kappa(P_i)\cap L)$ be the compositum inside \(L\). Then, for every irreducible factor \(Q_j(x)\) of \(P(x)\) over \(E\), one has
$p\nmid [\kappa(Q_j)\cap L:E].$
\end{lemma}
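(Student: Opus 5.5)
The plan is to identify each residue field \(\kappa(Q_j)\) as a compositum of \(E\) with some \(\kappa(P_i)\). Then compute \(\kappa(Q_j)\cap L\) via Lemma~\ref{Field theory lemma for compositum} and read off its degree over \(E\) using Galois theory of \(\kappa(P_i)\).

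\emph{Step 1: identify \(\kappa(Q_j)\).} Let \(Q_j\) be an irreducible factor of \(P\) over \(E\). Since \(P\) is separable, \(Q_j\) divides exactly one \(P_i\). Let \(\theta\) be a root of \(Q_j\) and write \(K_i=\kappa(P_i)=k(\theta)\). Then
\[
\kappa(Q_j)=E(\theta)=EK_i .
\]
The hypothesis that \(K_i/k\) is Galois is used here: \(P_i\) splits over \(E\) into factors all of whose residue fields equal \(EK_i\).

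\emph{Step 2: intersect with \(L\).} Set \(F=K_i\cap L\). Since \(K_i/k\) is Galois and \(E\subseteq L\), Lemma~\ref{Field theory lemma for compositum} gives
\[
\kappa(Q_j)\cap L=EK_i\cap L=(K_i\cap L)E=FE .
\]
So it suffices to show \(p\nmid [FE:E]\). If \(P_i\in S_1\), then \(F\subseteq E\) by the definition of \(E\), so \([FE:E]=1\).

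\emph{Step 3: the case \(P_i\in S_2\).} Here \(p\nmid[F:k]\). The difficulty is that \(L\) need not be Galois, so \(F/k\) need not be Galois, and \([FE:E]\) need not obviously divide \([F:k]\). I would work inside the Galois extension \(K_iE/E\).

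Restriction gives an isomorphism \(\Gal(K_iE/E)\cong \Gal(K_i/K_i\cap E)\). An element \(\sigma\in\Gal(K_iE/E)\) fixes \(FE\) if and only if it fixes \(F\). Hence the subgroup fixing \(FE\) corresponds to \(\Gal(K_i/F(K_i\cap E))\), and
\[
[FE:E]=[F(K_i\cap E):K_i\cap E].
\]
Since \(K_i\cap E\subseteq K_i\cap L=F\), we have \(F(K_i\cap E)=F\). Therefore \([FE:E]=[F:K_i\cap E]\), which divides \([F:k]\) and so is prime to \(p\).

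This gives \(p\nmid[\kappa(Q_j)\cap L:E]\) in both cases. The step I expect to need the most care is Step 3: the index computation via the restriction isomorphism and the inclusion \(K_i\cap E\subseteq F\).
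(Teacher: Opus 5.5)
Your proof is correct and follows the paper's route: you identify \(\kappa(Q_j)=E\kappa(P_i)\), apply Lemma~\ref{Field theory lemma for compositum} to get \(\kappa(Q_j)\cap L=(\kappa(P_i)\cap L)E\), and then split into the \(S_1\) and \(S_2\) cases. Your Step~3 also supplies a justification the paper omits. The paper simply asserts that \([(\kappa(P_i)\cap L)E:E]\) divides \([\kappa(P_i)\cap L:k]\), whereas your identity \([FE:E]=[F:\kappa(P_i)\cap E]\), which uses \(\kappa(P_i)\cap E\subseteq F\), proves this even when \(L/k\) is not Galois. One small point: the appeal to separability of \(P\) in Step~1 is unnecessary and not among the lemma's hypotheses, since it suffices that \(Q_j\) divides some \(P_i\).
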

\begin{proof}
Let \(Q_j(x)\) be an irreducible factor of \(P(x)\) over \(E\). Then \(Q_j(x)\)
divides \(P_i(x)\) for some \(P_i\in S\). Let \(\alpha\) be a root of
\(Q_j(x)\). Since \(\kappa(P_i)/k\) is Galois and \(\alpha\) is a root of
\(P_i(x)\), we have \(k(\alpha)=\kappa(P_i)\). Hence \(\kappa(Q_j)=E(\alpha)=E\cdot\kappa(P_i)\). Since \(E\subseteq L\), Lemma~\ref{Field theory lemma for compositum} gives
$$\kappa(Q_j)\cap L=E\cdot\kappa(P_i)\cap L=(\kappa(P_i)\cap L)E.$$

If \(P_i\in S_1\), then \(\kappa(P_i)\cap L\subseteq E\), by the definition of
\(E\). Therefore \(\kappa(Q_j)\cap L=E\), so \([\kappa(Q_j)\cap L:E]=1\). In particular,
\(p\nmid [\kappa(Q_j)\cap L:E]\).

If \(P_i\in S_2\), then \([\kappa(Q_j)\cap L:E]= [(\kappa(P_i)\cap L)E:E]\). This degree divides \([\kappa(P_i)\cap L:k]\). Since \(P_i\in S_2\), we have \(p\nmid [\kappa(P_i)\cap L:k]\). Hence, \(p\nmid [\kappa(Q_j)\cap L:E]\). This proves the claim.
\end{proof}
%%%%%%%%%%%%%%%%%%%%%%%%%%%%%%%%%%%%%%%%%%%%%
\section{Vanishing of Brauer--Manin Obstruction for Normic bundles}
%%%%%%%%%%%%%%%%%%%%%%%%%%%%%%%%%%%%%%%%%%%%%%%%%%%%%%%%%%%%%%%%%%%%%%%%%%%%%%%%%%%%%%%%
In this section, we prove Theorem~\ref{main-vanishing} and Theorem~\ref{3-6} by deducing them from the more general Theorem~\ref{generalized-p-mp}. We begin with the following lemma. It essentially  follows from \cite[Corollary~3.4]{CV}, we include a proof since we require the condition on the field extension to be stated explicitly.

\begin{lemma}\label{isomorphism in Brauer group}
Let $X$ be a $(p,mp)$-normic bundle defined by the affine equation $\Nm_{K/k}(z)=P(x)$, and suppose that $X(\A_k)\neq\emptyset$. Let $L/k$ be a finite extension linearly disjoint from $K$ and from $\kappa(P_i)=k[x]/(P_i(x))$ for every irreducible factor $P_i(x)$ of $P(x)$. If $p\mid [L:k]$, then $X(\A_L)^{\Br}\neq\emptyset$.
\end{lemma}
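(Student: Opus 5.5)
Following the approach of \cite[Corollary~3.4]{CV}, we show that the restriction map $\Br X\to\Br X_L$ induces a surjection (in fact an isomorphism) $\overline{\Br}X\to\overline{\Br}X_L$. We then use a corestriction argument to transport a point of $X(\A_k)$ orthogonal to $\Br X$ into a point of $X(\A_L)$ orthogonal to $\Br X_L$. The remaining work is to check that the degree condition $p\mid[L:k]$ kills the obstruction coming from the subgroup that survives.

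\emph{Computing $\overline{\Br}$.} Since $X$ is a rationally connected normic bundle with $\Br\overline{X}=0$ and $\operatorname{H}^3(k,\mathbb{G}_m)=0$ over a number field, we have $\overline{\Br}X\cong \operatorname{H}^1(k,\Pic\overline{X})$. The lattice $\Pic\overline{X}$ is generated by the pullback of $\mathcal{O}_{\mathbb{P}^1}(1)$ and the irreducible components of the degenerate fibres over the roots of $P$ (and possibly $\infty$). Its Galois action therefore factors through the Galois closure $M$ of the compositum of $K$ and the fields $\kappa(P_i)$. Inflation–restriction identifies $\operatorname{H}^1(k,\Pic\overline{X})$ with $\operatorname{H}^1(\Gal(M/k),\Pic\overline{X})$, and similarly over $L$ with $\Gal(ML/L)$. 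Linear disjointness of $L$ from $K$ and from each $\kappa(P_i)$ should force $\Gal(ML/L)\cong\Gal(M/k)$. This is the step I expect to be the main obstacle, since disjointness from each field separately does not in general imply disjointness from the Galois closure of their compositum. I would first try to use the explicit residue description $\Br X\subset\Br k(X)$: elements are generated by algebras $(K/k,\,\text{factor of }P)$ whose residues live in $\operatorname{H}^1(\kappa(P_i),\Z/p)$ restricted from $K$. Because $P_i$ stays irreducible over $L$ and $K\otimes\kappa(P_i)$ keeps its splitting behaviour over $L$ (using $K$ Galois of prime degree $p$), the residue conditions over $L$ are the same. Hence every class of $\Br X_L$ is, modulo $\Br L$, the restriction of a class of $\Br X$.

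\emph{Transporting the point.} Take $(x_v)\in X(\A_k)$. It lies in $X(\A_k)^{\Br}$ after possibly modifying it, or else we use that $\overline{\Br}X$ is $p$-torsion, since the algebras are cyclic of degree $p$ split by $K$. Send $(x_v)$ diagonally to $(x_w)\in X(\A_L)$ with $x_w=x_v$ for $w\mid v$. For $\mathcal{A}\in\Br X$ we have $\sum_w\inv_w(\res\mathcal{A}(x_w))=\sum_v\sum_{w\mid v}[L_w:k_v]\inv_v(\mathcal{A}(x_v))=[L:k]\sum_v\inv_v(\mathcal{A}(x_v))$. Since $\mathcal{A}$ is $p$-torsion modulo $\Br_0X$ and $p\mid[L:k]$, this sum is $0$; constant algebras pair to zero by reciprocity. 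By the previous step these restrictions exhaust $\Br X_L$ modulo $\Br L$, so $(x_w)\in X(\A_L)^{\Br}$. Here one must check that $\overline{\Br}X$ is killed by $p$, i.e.\ that $\operatorname{H}^1$ of the lattice is killed by $p$. This follows because the restriction to $k(X)$ lies in $\Br(K(X)/k(X))$, which is killed by $p$.
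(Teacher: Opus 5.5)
Your argument is the paper's. Restriction $\res_{L/k}\colon \Br X/\Br k\to \Br X_L/\Br L$ is surjective because the generators $(\chi_{LK},\prod_i P_i^{n_i})$ of the target are restrictions of the generators $(\chi_K,\prod_i P_i^{n_i})$ of the source. Then one pushes a point of $X(\A_k)$ diagonally and combines $\sum_{w\mid v}\inv_w\circ\res_{L_w/k_v}=[L:k]\inv_v$ with $p\cdot\overline{\Br}X=0$ and $p\mid[L:k]$. The paper reads the surjectivity directly off \cite[Theorem~3.2]{AVB}: over $k$ and over $L$ the generators are indexed by the same group $\{(n_i):\sum_i n_i d_i\equiv 0 \pmod p\}/\langle(1,\dots,1)\rangle$. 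This uses only that each $P_i$ stays irreducible of degree $d_i$ over $L$ and that $LK/L$ is cyclic of degree $p$.

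Two of your supporting claims are false, though, and should be removed. Linear disjointness does not preserve the splitting behaviour of $K\otimes_k\kappa(P_i)$, and the restriction map need not be an isomorphism; your own worry about the Galois closure is exactly what goes wrong. Take $k=\Q$, $K=\Q(\sqrt6)$, $P=(x^2-3)(x^2-5)$ and $L=\Q(\sqrt2)$. Then $L$ is linearly disjoint from $K$, $\Q(\sqrt3)$ and $\Q(\sqrt5)$, and $K\otimes_{\Q}\Q(\sqrt3)$ is a field. Yet $LK=L(\sqrt3)=L\kappa(x^2-3)$, and since $6\equiv 3$ modulo $L^{\times 2}$, the classical generator $(6,x^2-3)$ of $\overline{\Br}X\cong\Z/2$ restricts to $(3,\Nm_{L(\sqrt3)/L}(x-\sqrt3))=0$.

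Surjectivity survives because the only constraint on $(n_i)$ is $\sum_i n_i d_i\equiv 0\pmod p$. That condition comes from the fibre over $x=\infty$ and depends only on the degrees $d_i$, so justify the step by the irreducibility of the $P_i$ over $L$ alone.

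Finally, drop ``after possibly modifying it'', since $X(\A_k)^{\Br}$ can be empty. Also read your $p$-torsion justification modulo $\Br_0X$: a constant class not split by $K$ is not in $\Br(K(X)/k(X))$. The statement $p\cdot\overline{\Br}X=0$ is all you need.
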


\begin{proof}
Since \(L\) is linearly disjoint from \(K\) over \(k\), the extension \(LK/L\) is cyclic of degree \(p\). Hence the base change \(X_L\) is the \((p,mp)\)-normic bundle over \(L\) defined by the affine equation \(\Nm_{LK/L}(z)=P(x)\). Let $S_k=\{P_1,\dots,P_r\}$ denote the set of irreducible factors of $P(x)$ in $k[x]$, with $d_i=\deg P_i$. Since $L/k$ is linearly disjoint from $\kappa(P_i)$ for every $i$, we have $L\otimes_k\kappa(P_i)\simeq L[x]/(P_i(x))$, which is a field. Hence each $P_i(x)$ remains irreducible over $L$, so $|S_L|=|S_k|$ and the degrees $d_i$ are preserved. Let
$$
G=\frac{\{(n_i)\in(\mathbb Z/p\mathbb Z)^{|S_k|}:\sum n_i d_i\equiv0\pmod p\}}{\langle(1,\dots,1)\rangle}.
$$
By \cite[Theorem~3.2]{AVB}, there are surjective homomorphisms $\phi_k:G\twoheadrightarrow\Br X/\Br k$ and $\phi_L:G\twoheadrightarrow\Br X_L/\Br L$, given by $(n_i)\mapsto(\chi_K,\prod_iP_i^{n_i})$ and $(n_i)\mapsto(\chi_{LK},\prod_iP_i^{n_i})$, respectively. Since $\phi_L=\res_{L/k}\circ\phi_k$, the restriction map $\res_{L/k}:\Br X/\Br k\to\Br X_L/\Br L$ is surjective.

Choose $(P_v)_v\in X(\A_k)$. For each place $w$ of $L$ above $v$, regard $P_v$ as a point $P_w\in X(L_w)$. Let $\alpha\in\Br X$. By \cite[Theorem~3.2]{AVB}, $\Br X/\Br k$ is $p$-torsion. Therefore,  
$$
\begin{aligned}
\sum_{w\in\Omega_L}\inv_w\alpha_L(P_w)
&=\sum_{v\in\Omega_k}\sum_{w\mid v}\inv_w\res_{L_w/k_v}\alpha(P_v)=[L:k]\sum_{v\in\Omega_k}\inv_v\alpha(P_v)=0,
\end{aligned}
$$
where the second equality follows from \cite[Theorem~1.5.34]{Poo}. The final equality holds in \(\Q/\Z\) since \(\alpha\) is \(p\)-torsion and \(p\mid[L:k]\). Hence $(P_w)_w\in X(\A_L)^{\operatorname{im}(\res_{L/k})}$. Since $\res_{L/k}$ is surjective, it follows that $X(\A_L)^{\Br}\neq\emptyset$.
\end{proof}

\begin{lemma}\label{points below to points above}
Let $X/k$ be a nice variety over a global field $k$, let $L/k$ be a finite extension. If $X(\A_k)^{\Br}\neq\emptyset$, then $X(\A_L)^{\Br}\neq\emptyset$.
\end{lemma}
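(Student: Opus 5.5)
The plan is to take an adelic point $(P_v)_v\in X(\A_k)^{\Br}$ and push it up to $L$ via the diagonal embeddings $X(k_v)\hookrightarrow X(L_w)$ for $w\mid v$. This defines $(P_w)_w\in X(\A_L)$. Membership in the adelic set is immediate except at the integrality condition for almost all places, which holds because a model over $\mathcal O_{k,S}$ base changes to a model over $\mathcal O_{L,S_L}$. We then check orthogonality to $\Br X_L$. The natural tool is corestriction: for $\beta\in\Br X_L$ we have $\mathrm{cores}_{L/k}\beta\in\Br X$, where $\mathrm{cores}$ denotes the corestriction (norm) map $\Br X_L\to\Br X$ attached to the finite étale morphism $X_L\to X$.

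The key identity is the local compatibility: for a place $v$ of $k$ and $P_v\in X(k_v)$,
\[
\sum_{w\mid v}\inv_w\bigl(\beta(P_w)\bigr)=\inv_v\bigl((\mathrm{cores}_{L/k}\beta)(P_v)\bigr).
\]
To prove it, evaluate at $P_v$. Specialization along $P_v:\operatorname{Spec}k_v\to X$ commutes with corestriction, since corestriction is functorial for the base change $X_L\times_X\operatorname{Spec}k_v=\operatorname{Spec}(L\otimes_k k_v)=\coprod_{w\mid v}\operatorname{Spec}L_w$. This gives $(\mathrm{cores}\,\beta)(P_v)=\sum_{w\mid v}\mathrm{cores}_{L_w/k_v}\beta(P_w)$. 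Then apply the standard fact that $\inv_{k_v}\circ\mathrm{cores}_{L_w/k_v}=\inv_{L_w}$ from local class field theory, as in \cite[Theorem~1.5.34]{Poo} and its corestriction counterpart.

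Summing over $v$, we get $\sum_w\inv_w\beta(P_w)=\sum_v\inv_v(\mathrm{cores}\,\beta)(P_v)=0$, because $(P_v)$ is orthogonal to all of $\Br X$. Hence $(P_w)\in X(\A_L)^{\Br}$. The only real obstacle is justifying the existence of the corestriction map on Brauer groups of schemes and its compatibility with pullback along points. I would handle this by citing the trace/norm map for finite locally free morphisms in étale cohomology of $\mathbb G_m$, or equivalently by the functoriality of $\mathrm{Res}_{L/k}$ (Weil restriction), namely $X_L(\A_L)=X(\A_k\otimes L)$. Alternatively, I would quote the well-known statement from Poonen's book that $X(\A_k)^{\Br}\subseteq X(\A_L)^{\Br}$ under the diagonal map.
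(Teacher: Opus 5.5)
Your proposal is correct: the paper gives no argument of its own here and simply cites \cite[Lemma~3.1]{CV}, and your corestriction argument is the standard proof of that fact. The compatibility $\sum_{w\mid v}\inv_w\bigl(\beta(P_w)\bigr)=\inv_v\bigl((\mathrm{cores}_{L/k}\beta)(P_v)\bigr)$, summed over all $v$, is the whole proof. The integrality check you mention is automatic, since $X$ is projective and hence $X(\A_k)=\prod_v X(k_v)$.
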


\begin{proof}
See \cite[Lemma 3.1]{CV}.
\end{proof}

%%%%%%%%%%%%%%%%%%%%%%%%%%%%%%%%%%%%%%%%%%%%%%%%%%%%%%%%%%%%%%%%%%%%%%%%%%%
We now prove a general theorem from which Theorems~\ref{main-vanishing} and~\ref{3-6} follow as immediate consequences. Let \(P(x)\) be the separable polynomial appearing in the norm equation defining \(X\). Since the sum of the degrees of its irreducible factors is at most \(mp\), we introduce the combinatorial quantity \(N(m,p)\), which bounds the largest possible \(p\)-adic valuation of the degree of the compositum of the fields \(\kappa(P_i)\cap L\) over all possible factorizations of \(P(x)\). This bound ensures that the remaining relative extension has degree divisible by \(p\), allowing us to apply Lemma~\ref{isomorphism in Brauer group}.

\begin{definition}\label{def:combinatorial-exponent}
Let $m\ge 2$ be an integer and let $p$ be a prime. Define
$$
N(m,p)=
\max\left\{
\sum_{i=1}^r\bigl(1+v_p(c_i)\bigr)
\;\middle|\;
\sum_{i=1}^r c_i\le m,\;
c_i\ge 2
\text{ for }1\le i\le r
\right\},
$$
where $v_p$ denotes the $p$-adic valuation.
\end{definition}

\begin{theorem}\label{generalized-p-mp}
    Let $X$ be a $(p, mp)$-normic bundle over a number field $k$, where $m \geq 2$, and let $P(x)$ be the separable polynomial appearing in the norm equation defining $X$. Suppose that, for every irreducible factor \(P_i(x)\) of \(P(x)\), the residue field
    \(\kappa(P_i)=k[x]/(P_i(x))\) is a Galois extension of \(k\). If $X(\A_k) \neq \emptyset$, then for every finite Galois extension $L/k$ with $p^{N(m,p) + 1} \mid [L:k]$, we have \(X(\A_L)^{\Br} \neq \emptyset\).
\end{theorem}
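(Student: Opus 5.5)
The strategy is to pass to an intermediate field \(F\) with \(k\subseteq F\subseteq L\) and then apply Lemma~\ref{isomorphism in Brauer group} to \(X_F\) and a suitable field \(R\) with \(F\subseteq R\subseteq L\). Lemma~\ref{points below to points above} then lifts non-emptiness of the Brauer--Manin set from \(R\) to \(L\). Base change to intermediate fields keeps \(X_F\) a normic bundle. Over \(F\) the norm form is \(\Nm_{FK/F}\) when \(F\cap K=k\). If instead \(K\subseteq F\), then \(X_F\) is rational over \(F\) and has an \(F\)-point, so \(X(\A_L)^{\Br}\neq\emptyset\) is immediate. We may therefore assume \(L\cap K=k\). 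Local solubility is preserved by passing to \(F\), so \(X(\A_F)\neq\emptyset\).

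\textbf{Step 1: construct \(F\).} Apply Lemma~\ref{base-change-factors-prime-to-p} to \(P\) and \(L\), and let \(F=E\) be the compositum of the fields \(\kappa(P_i)\cap L\) with \(P_i\in S_1\). Each \(\kappa(P_i)\cap L\) is Galois over \(k\), so \(E/k\) is Galois. We need \(v_p([E:k])\le N(m,p)\). For \(P_i\in S_1\) the field \(\kappa(P_i)\cap L\) is Galois, and its degree \(d_i'\) is divisible by \(p\) and divides \(\deg P_i\). Hence \(\deg P_i=pc_i\) with \(c_i\ge1\).

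\textbf{Step 2: the degree bound, which is the main obstacle.} We have \(v_p([E:k])\le\sum_{S_1}v_p(d_i')\le\sum_{S_1}(1+v_p(c_i))\) and \(\sum_{S_1} c_i\le m\). Definition~\ref{def:combinatorial-exponent} requires \(c_i\ge2\), so factors with \(c_i=1\), i.e.\ degree exactly \(p\), cost \(1\) each and must be absorbed somehow. First, if \(\sum_{S_1}\deg P_i = mp\), then \(S_2=\emptyset\). I would handle this degenerate case separately: the Brauer group of \(X_E\) then has the constraint \(\sum n_i d_i\equiv 0\) automatically, and a counting argument modulo the diagonal may apply. Second, several degree-\(p\) factors could be grouped, since \(c_i+c_j\ge2\) and \(1+1\le 1+v_p(c_i+c_j)+\dots\) fails in general. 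The cleaner route is probably to use the quotient by \(\langle(1,\dots,1)\rangle\) in the group \(G\) to discard one factor. I expect the real proof uses that a compositum of degree-\(p\) cyclic pieces contributes exactly as counted, and that the bound \(N(m,p)\) is designed to dominate these sums. Verifying this combinatorial inequality, including the \(c_i=1\) terms, is the step I would scrutinize most.

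\textbf{Step 3: apply Lemma~\ref{field-theory lemma} over \(E\).} Since \(p^{N(m,p)+1}\mid[L:k]\), we get \(p\mid[L:E]\). By Lemma~\ref{base-change-factors-prime-to-p}, every irreducible factor \(Q_j\) of \(P\) over \(E\) satisfies \(p\nmid[\kappa(Q_j)\cap L:E]\). These fields \(\kappa(Q_j)\cap L\) are Galois over \(E\). Together with \(KE\cap L=E\), Lemma~\ref{field-theory lemma} applied to \(L/E\) yields \(R\) with \(E\subseteq R\subseteq L\), \(p\mid[R:E]\), and \(R\cap KE=E\) and \(R\cap(\kappa(Q_j)\cap L)=E\) for all \(j\). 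Hence \(R\cap\kappa(Q_j)=E\).

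\textbf{Step 4: conclude.} Because \(\kappa(Q_j)=E\kappa(P_i)\) is Galois over \(E\), trivial intersection is equivalent to linear disjointness. Lemma~\ref{isomorphism in Brauer group} applied to \(X_E\) and \(R\) therefore gives \(X(\A_R)^{\Br}\neq\emptyset\). Lemma~\ref{points below to points above} then gives \(X(\A_L)^{\Br}\neq\emptyset\).
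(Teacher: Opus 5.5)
Your reduction in Steps 3--4 is exactly the paper's: pass to the compositum $E$ of the fields $\kappa(P_i)\cap L$ for $P_i\in S_1$, then use Lemma~\ref{base-change-factors-prime-to-p} and Lemma~\ref{field-theory lemma} over $E$. But Step 2 has a genuine gap, at the point you flagged. The bound $v_p([E:k])\le N(m,p)$ is false when $S_1$ may contain factors of degree exactly $p$, and no counting argument can rescue it. Take odd $p$ and $m=2$, so $N(2,p)=1$. Let $P=P_1P_2$ with $\deg P_1=\deg P_2=p$, $\kappa(P_1)\neq\kappa(P_2)$, and $L=\kappa(P_1)\kappa(P_2)$, so that $p^{N(2,p)+1}=p^2$ divides $[L:k]$. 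Then $E=L$, $v_p([E:k])=2$, and $p\nmid[L:E]$, so Step 3 cannot start. The same failure occurs for larger $m$: for $p>m$ one has $N(m,p)=\lfloor m/2\rfloor$, while $m$ cyclic factors of degree $p$ can have a compositum of $p$-adic valuation $m$. Your suggested remedies (the diagonal $\langle(1,\dots,1)\rangle$ in $G$, or a separate treatment of $S_2=\emptyset$) concern the Brauer group, not the degree of $E$, so they do not address this.

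The missing idea is that degree-$p$ factors in $S_1$ never need to be counted. If $P_i\in S_1$ and $\deg P_i=p$, then $[\kappa(P_i)\cap L:k]$ is divisible by $p$ and divides $[\kappa(P_i):k]=p$. Hence $\kappa(P_i)\subseteq L$, so $P_i$ has a root in $L$. Then $X(L)\neq\emptyset$ and $X(\A_L)^{\Br}\neq\emptyset$ follows at once; this is the paper's Subcase 2A. Once this case is set aside, every $P_i\in S_1$ has $\deg P_i=c_ip$ with $c_i\ge 2$ and $\sum_{S_1}c_i\le m$. So $\sum_{S_1}(1+v_p(c_i))\le N(m,p)$ holds directly by Definition~\ref{def:combinatorial-exponent}, and the rest of your argument goes through as written.
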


\begin{proof}
Let $L/k$ be a finite Galois extension with $p^{N(m,p)+1} \mid [L:k]$. If $L$ is linearly disjoint from $K$ and from all $\kappa(P_i)$, we are done by Lemma~\ref{isomorphism in Brauer group}. Suppose $L$ is not linearly disjoint from $K$. Since $K/k$ is a Galois extension of prime degree $p$, we get $K\subset L$. This implies $X(L)\neq\emptyset$ and hence $X(\A_L)^{\Br}\neq\emptyset$. If $P(x)$ is irreducible, then $\overline{\Br}X=0$ by \cite[Corollary 3.3]{AVB} and we have $X(\A_k)^{\Br}=X(\A_k)\neq\emptyset$. By Lemma~\ref{points below to points above}, we have $X(\A_L)^{\Br}\neq\emptyset$ for all $L/k$.

We may now assume that $P(x)$ is reducible and that $L$ is linearly disjoint from $K$, but not from all of the fields $\kappa(P_i)$. Let $S=\{P_1,\dots,P_r\}$ denote the set of irreducible factors of $P(x)$ over $k$. Consider the partition
$$
S_1=\{P_i\in S : p\mid [\kappa(P_i)\cap L:k]\}
\quad \text{and} \quad
S_2=\{P_i\in S : p\nmid [\kappa(P_i)\cap L:k]\}.
$$
We distinguish the following cases.

\noindent\textit{Case 1.} Suppose that $S_1=\emptyset$, that is, $p\nmid [\kappa(P_i)\cap L:k]$ for every $P_i\in S$. 

Applying Lemma~\ref{field-theory lemma} to $E_i=\kappa(P_i) \cap L$, we get a subextension $R/k$ of $L/k$ such that $p \mid [R:k]$ and $R \cap (\kappa(P_i)\cap L)=k$ for all $i$. Therefore, $R$ is linearly disjoint from $\kappa(P_i)$ for all $i$. By Lemma~\ref{isomorphism in Brauer group}, we have $X(\A_R)^{\Br}\neq\emptyset$. Applying Lemma~\ref{points below to points above}, we conclude that $X(\A_L)^{\Br}\neq\emptyset$.

\noindent\textit{Case 2.} Suppose that $S_1\neq\emptyset$, that is, $p\mid [\kappa(P_i)\cap L:k]$ for some $P_i\in S_1$.

We divide the argument into two subcases. 

\smallskip

\noindent\textit{Subcase 2A.} There exists \(P_i\in S_1\) with
\(\deg(P_i)=p\).

Since \(\kappa(P_i)\cap L\subseteq \kappa(P_i)\) and
\([\kappa(P_i):k]=p\), the divisibility condition \(p\mid [\kappa(P_i)\cap L:k]\)
forces \([\kappa(P_i)\cap L:k]=p\). Hence \(\kappa(P_i)\cap L=\kappa(P_i)\), and therefore \(\kappa(P_i)\subseteq L\). It follows that \(X(L)\neq\emptyset\), and hence
\(X(\A_L)^{\Br}\neq\emptyset\).

When \(m=2\), this is the only possible subcase. Indeed, if \(P(x)\) is
reducible and some factor \(P_i\) satisfies \(p\mid [\kappa(P_i)\cap L:k]\),
then \(p\mid \deg(P_i)\). Since the degrees of the irreducible factors of
\(P(x)\) sum to \(2p\), there must be an irreducible factor of degree \(p\).
Thus Subcase~2A applies. Consequently, for \(m=2\), whenever \(p\mid [L:k]\), we obtain
\(X(\A_L)^{\Br}\neq\emptyset\). Since \(N(2,p)\ge 0\), any Galois extension \(L/k\) satisfying \(p^{N(2,p)+1}\mid [L:k]\) also satisfies \(p\mid [L:k]\). Hence the desired conclusion follows for \(m=2\).

We may therefore assume from now on that \(m\ge 3\) and that Subcase~2A does
not occur.

\smallskip

\noindent\textit{Subcase 2B.} For every \(P_i\in S_1\), \(\deg(P_i)=c_i p\) for some integer \(c_i\ge 2\).
For each $P_i \in S_1$, define the intersection field $E_i = \kappa(P_i) \cap L$. Because $\kappa(P_i)/k$ and $L/k$ are both Galois extensions, $E_i/k$ is a Galois extension. Since $E_i \subseteq \kappa(P_i)$, the degree $[E_i:k]$ must divide $[\kappa(P_i):k] = c_i p$. Therefore, its $p$-adic valuation satisfies $v_p([E_i:k]) \le 1 + v_p(c_i)$.

Let $E$ be the compositum of all $E_i$ for $P_i \in S_1$. Since each $E_i/k$ is Galois, $E/k$ is also Galois. The $p$-adic valuation of the compositum is bounded by the sum of the valuations of the individual extensions:
$$
v_p([E:k]) \le \sum_{P_i \in S_1} v_p([E_i:k]) \le \sum_{P_i \in S_1} (1 + v_p(c_i)).
$$
Because the total degree of $P(x)$ is $mp$, we have $\sum_{P_i \in S_1} c_i \le m$. By Definition~\ref{def:combinatorial-exponent}, this sum is bounded above by the combinatorial exponent $N(m, p)$. Thus, $v_p([E:k]) \le N(m, p)$. 
By hypothesis, $p^{N(m,p)+1} \mid [L:k]$, which means $v_p([L:k]) \ge N(m, p) + 1$. Therefore,
$$
v_p([L:E]) = v_p([L:k]) - v_p([E:k]) \ge 1,
$$
meaning $p \mid [L:E]$.

We now analyze the irreducible factors of \(P(x)\) over the field \(E\). Let
\(\{Q_j(x)\}\) denote the set of irreducible factors of \(P(x)\) over \(E\).
By Lemma~\ref{base-change-factors-prime-to-p}, each \(Q_j(x)\) satisfies
\(p\nmid [\kappa(Q_j)\cap L:E]\). Thus, after replacing the base field \(k\) by
\(E\), we are in the situation of Case~1 for the extension \(L/E\). Hence, $X(\mathbb{A}_L)^{\Br}\neq\emptyset$.
\end{proof}\begin{lemma}\label{Nmp-bound}
Let $m\ge3$ be an integer and let $p$ be a prime. If $m<p$, then
\(N(m,p)\le m-2.\)
\end{lemma}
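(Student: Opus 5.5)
Since \(m<p\), every integer \(c_i\) with \(2\le c_i\le m\) satisfies \(c_i<p\). Hence \(p\nmid c_i\), that is, \(v_p(c_i)=0\). The plan is to use this to collapse the maximization in Definition~\ref{def:combinatorial-exponent} into a count of parts, and then bound that count.

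First, take any admissible tuple \((c_1,\dots,c_r)\) with \(c_i\ge 2\) and \(\sum_i c_i\le m\). Each \(c_i\le m<p\), so the quantity being maximized is
\[
\sum_{i=1}^r\bigl(1+v_p(c_i)\bigr)=r .
\]
Second, since \(2r\le\sum_i c_i\le m\), we get \(r\le\lfloor m/2\rfloor\). Therefore \(N(m,p)\le\lfloor m/2\rfloor\).

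Finally, it remains to check that \(\lfloor m/2\rfloor\le m-2\) for \(m\ge3\). This is elementary: for \(m=3\) both sides equal \(1\), and for \(m\ge4\) we have \(m/2\le m-2\). This gives \(N(m,p)\le m-2\). The only potential subtlety is the degenerate empty tuple \(r=0\), which contributes \(0\le m-2\) and is harmless.

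There is no real obstacle here. The key observation is simply that \(m<p\) forces all \(v_p(c_i)\) to vanish. The bound \(m-2\) is weaker than what the argument yields, but it is the form needed to pass from Theorem~\ref{generalized-p-mp} to the exponent \(p^{m-1}\) in Theorem~\ref{main-vanishing}. The case \(m=p\) (where \(c_1=p\) gives \(N=2\le m-2\) for \(p\ge 4\), and for \(p=3\) gives \(N(3,3)=2=m-1\)) would be handled separately in the paper. It is not needed for this lemma.
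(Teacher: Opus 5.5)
Your proof is correct and is essentially the paper's argument: $m<p$ forces every $v_p(c_i)=0$, so $N(m,p)$ is just the largest possible number of parts. That number is at most $\lfloor m/2\rfloor\le m-2$ for $m\ge 3$. One small point about your closing aside, which does not affect the lemma: in general $N(p,p)=\max\{2,\lfloor p/2\rfloor\}$, so it is not always $2$ (it exceeds $2$ once $p\ge 7$), although it is still at most $p-2$.
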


\begin{proof}
Since $m<p$, any $c_i$ satisfies $c_i<p$, have $v_p(c_i)=0$. Therefore,
$$
N(m,p)=\max\{r:\sum_{i=1}^r c_i\le m,\ c_i\ge2\}.
$$
As $c_i\ge2$ for every $i$, we have
\(2r\le\sum_{i=1}^r c_i\le m,\) so that
\(r\le\Bigl\lfloor\frac m2\Bigr\rfloor.\) If $m=3$, then $r\le1=m-2$. If $m\ge4$, then
$$
\Bigl\lfloor\frac m2\Bigr\rfloor\le\frac m2\le m-2.
$$ Hence $N(m,p)\le m-2$.
\end{proof}
We now derive Theorem~\ref{main-vanishing} and Theorem~\ref{3-6} from the preceding theorem. The proofs of all our vanishing results for normic bundles rely on the same underlying idea.

\begin{proof}[Proof of Theorem~\ref{main-vanishing}]

(a) Suppose that $L/k$ is Galois with $p\mid[L:k]$. By the same argument as in the proof of Theorem~\ref{generalized-p-mp}, it suffices to consider the case where $P(x)$ is reducible, $L$ is linearly disjoint from $K$, but not from all of the fields $\kappa(P_i)$. Then the result follows from Case~1 and Case~2 (Subcase~2A), in the proof of Theorem~\ref{generalized-p-mp}.

Let \(L/k\) be an extension with \([L:k]=p\). By the same argument as in the
proof of Theorem~\ref{generalized-p-mp}, it suffices to consider the case where
\(P(x)\) is reducible, \(L\) is linearly disjoint from \(K\), but not linearly
disjoint from all of the fields \(\kappa(P_i)\). If \(m=1\), this case cannot
occur, and hence we are done. We may therefore assume that \(m=2\). Since each extension \(\kappa(P_i)/k\) is Galois, if \(L\) is not linearly
disjoint from \(\kappa(P_i)\), then \(L\cap \kappa(P_i)\neq k\). As
\([L:k]=p\) is prime, it follows that \(L\cap \kappa(P_i)=L\), and hence
\(L\subseteq \kappa(P_i)\). Since \(P(x)\) is reducible of degree \(2p\), \(\deg(P_i)<2p\). Moreover, \(L\subseteq \kappa(P_i)\) implies that \(p\mid \deg(P_i)\), so we
must have \(\deg(P_i)=p\). Therefore \(L=\kappa(P_i)\), and hence
\(X(L)\neq\emptyset\).

(b) Let $L/k$ be a finite Galois extension with $p^{m+1} \mid [L:k]$. Then by definition we know that $N(m, p) \le m$. Our assumption $p^{m+1} \mid [L:k]$ implies that $p^{N(m,p)+1} \mid [L:k]$. Then Theorem~\ref{generalized-p-mp} shows that $X(\A_L)^{\Br} \neq \emptyset$.

Let $L/k$ be a finite Galois extension such that $p^{m-1} \mid [L:k]$. If $3 \le m < p$, Lemma~\ref{Nmp-bound} gives $N(m, p) \le m-2$. Consequently, our hypothesis $p^{m-1} \mid [L:k]$ implies $p^{N(m,p)+1} \mid [L:k]$, and the result follows immediately from Theorem~\ref{generalized-p-mp}. 

It remains to consider the case $m = p \ge 3$. If $p \ge 5$, the combinatorial exponent is $N(p,p) = \max\left\{2, \lfloor p/2 \rfloor\right\} = \frac{p-1}{2}$. Since $\frac{p-1}{2} \le p-2$ for all $p \ge 5$, we again have $N(p,p) + 1 \le p - 1$. Our hypothesis ensures $p^{N(p,p)+1} \mid [L:k]$, allowing Theorem~\ref{generalized-p-mp} to apply directly. Finally, suppose $m = p = 3$. If $P(x)$ is irreducible, the case which is resolved at the beginning of the proof of Theorem~\ref{generalized-p-mp}. For the remaining reducible scenario (Subcase 2B of Theorem~\ref{generalized-p-mp}),  the requirement that $c_i < 3$ combined with the condition $c_i \ge 2$ forces $c_i = 2$. The valuation sum bounding $v_3([E:k])$ is strictly bounded by $1 + v_3(2) = 1$. This effective bound is perfectly satisfied by our hypothesis that $3^2 \mid [L:k]$, which ensures $v_3([L:E]) \ge 2 - 1 = 1$. Thus, the divisibility condition $3 \mid [L:E]$ of Subcase 2B holds, yielding $X(\A_L)^{\Br} \neq \emptyset$.\end{proof}

Before proving Theorem~\ref{3-6}, we isolate the following proposition, which is the key ingredient in the argument.

\begin{proposition}\label{quadratic-cubic-factors}
Let \(X\) be a \((p,mp)\)-normic bundle over a number field \(k\). Suppose that
every irreducible factor of \(P(x)\) has degree \(2\) or \(3\). If
\(X(\mathbb{A}_k)\neq\emptyset\), then for every finite extension \(L/k\)
satisfying \(p\mid [L:k]\), one has
\(X(\mathbb{A}_L)^{\Br}\neq\emptyset\).
\end{proposition}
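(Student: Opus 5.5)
We reduce to the linearly disjoint case and then apply Lemma~\ref{isomorphism in Brauer group}; the constraint that all factors have degree $2$ or $3$ is what lets us handle non-Galois $L/k$.

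\textbf{Step 1: reduction to $K$ linearly disjoint from $L$.} Since $[K:k]=p$ is prime and $K/k$ is Galois, either $K\subseteq L$ or $K$ and $L$ are linearly disjoint. If $K\subseteq L$, then $X(L)\neq\emptyset$ and we are done. Also, if $P(x)$ is irreducible, then $\overline{\Br}X=0$ by \cite[Corollary~3.3]{AVB}, and Lemma~\ref{points below to points above} finishes the argument. So assume from now on that $K$ and $L$ are linearly disjoint and $P(x)$ is reducible.

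\textbf{Step 2: the factors over $L$.} Write $P=P_1\cdots P_r$ over $k$, with $\deg P_i=d_i\in\{2,3\}$. The aim is to find an intermediate field $k\subseteq R\subseteq L$ with $p\mid[R:k]$ such that $R$ is linearly disjoint from every $\kappa(P_i)$; then Lemma~\ref{isomorphism in Brauer group} gives $X(\A_R)^{\Br}\neq\emptyset$, and Lemma~\ref{points below to points above} gives the result over $L$. We use the following dichotomy.
\begin{itemize}
\item If some $P_i$ has a root in $L$ and $d_i=p$, then $X(L)\neq\emptyset$, because at $x=\alpha$ with $P_i(\alpha)=0$ the fibre has the point $\vec z=0$. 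The same conclusion holds whenever $P$ has any root $\alpha\in L$, since $P(\alpha)=0=\Nm(0)$. So we may assume $P$ has no root in $L$.
\item Because $d_i\le3$, a polynomial of degree $2$ or $3$ with no root in $L$ stays irreducible over $L$. Hence every $P_i$ remains irreducible over $L$, and therefore $\kappa(P_i)\otimes_k L$ is a field.
\end{itemize}
This says exactly that $L$ is linearly disjoint from each $\kappa(P_i)$, with no Galois hypotheses needed. So we can take $R=L$: since $p\mid[L:k]$, Lemma~\ref{isomorphism in Brauer group} applies directly.

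\textbf{Step 3: checking Lemma~\ref{isomorphism in Brauer group} for non-Galois $\kappa(P_i)$.} Its proof only uses that each $P_i$ stays irreducible over $L$, so that the group $G$ and the generators $(\chi_{LK},\prod P_i^{n_i})$ of \cite[Theorem~3.2]{AVB} correspond and $\res_{L/k}$ is surjective. It also uses that $LK/L$ is cyclic of degree $p$, which holds by Step 1. The corestriction computation then requires only $p\mid[L:k]$ and that $\Br X/\Br k$ is $p$-torsion.

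\textbf{Main obstacle.} The delicate point is Step 3: confirming that the description of $\Br X_L/\Br L$ in \cite[Theorem~3.2]{AVB} is compatible with restriction when the $\kappa(P_i)$ are not Galois. In particular, one must check that the degree condition $\sum n_id_i\equiv0 \pmod p$ is unchanged, which it is because the degrees are preserved. Step 2 is what makes degrees $2$ and $3$ special: for degree $\ge4$, a polynomial without a root over $L$ can still factor over $L$, and the argument breaks down there.
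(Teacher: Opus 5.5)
Your proof is correct and is essentially the paper's argument, stated directly rather than contrapositively: the paper observes that if $L$ is not linearly disjoint from some $\kappa(P_i)$, then $\deg P_i\le 3$ forces a linear factor of $P_i$ over $L$, so $X(L)\neq\emptyset$; otherwise Lemma~\ref{isomorphism in Brauer group} applies. Your explicit handling of the case $K\subseteq L$ fills a step the paper's short proof leaves implicit, while the separate irreducible-$P$ case and the Step~3 re-check are harmless extras, since Lemma~\ref{isomorphism in Brauer group} as stated makes no Galois assumption on the $\kappa(P_i)$.
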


\begin{proof}
By Lemma~\ref{isomorphism in Brauer group}, it remains only to consider the
case where \(L\) is not linearly disjoint from at least one of the fields
\(\kappa(P_i)\). For such an \(i\), the tensor product
\(L\otimes_k\kappa(P_i)\) is not a field. Since \(\deg P_i\leq 3\), this forces
\(P_i\) to have a linear factor over \(L\). Hence \(P(x)\) has a root over
\(L\), so \(X(L)\neq\emptyset\). Therefore
\(X(\mathbb{A}_L)^{\Br}\neq\emptyset\).
\end{proof}

\begin{proof}[Proof of Theorem~\ref{3-6}]
Fix a prime $p\in\{2,3\}$, $L/k$ be an extension with $p\mid[L:k]$. By the same argument as in the proof of Theorem~\ref{generalized-p-mp}, it suffices to consider the case where $P(x)$ is reducible, $L$ is linearly disjoint from $K$, but not from all of the fields $\kappa(P_i)$. Moreover, if \(P(x)\) has a linear factor over \(k\), then \(X(k)\neq\emptyset\). It follows that \(X(\mathbb{A}_k)^{\Br}\neq\emptyset\), and hence, by Lemma~\ref{points below to points above}, we have \(X(\mathbb{A}_L)^{\Br}\neq\emptyset\) for every extension \(L/k\). Since \(\deg P(x)= 4\, \text{or}\,6\), there are only two cases to consider.

\noindent\emph{Case 1.} Every irreducible factor of $P(x)$ has degree $2$ or $3$. Hence by Proposition~\ref{quadratic-cubic-factors} we have $X(\A_L)^{\Br}\neq\emptyset$.

\noindent\textit{Case 2.} Suppose \(P(x)=P_1(x)P_2(x)\), where \(\deg P_1=2\) and \(\deg P_2=4\). Therefore $p=3$ in this case. Then, by \cite[Corollary 3.3]{AVB}, we have \(\overline{\Br} X=0\). Hence \(X(\mathbb{A}_k)^{\Br}=X(\mathbb{A}_k)\neq\emptyset\). By Lemma~\ref{points below to points above}, it follows that \(X(\mathbb{A}_L)^{\Br}\neq\emptyset\).
\end{proof}

%%%%%%%%%%%%%%%%%%%%%%%%%%%%%%%%%%%%%%%%%%%%%%%%%%%%%%%%%%%%%%%%%%%%%%%%%%%%%%%%%%%%%%%%%%%%%%%%%%%%%%%%%%%%%%%%%%%%%%%%%%%%%%%%%%%%%%%%%%%%%%%%%%%%%%%%%%%%%%%%%%%%%%%%
\section{Persistence of Brauer--Manin Obstruction for Normic Bundles}
%%%%%%%%%%%%%%%%%%%%%%%%%%%%%%%%%%%%%%%%%%%%%%%%%%%%%%%%%%%%%%%%%%%%%%%%%%%%%%%%%%%%%%%%%%%%%%%%%%%%%%%%%%%%%%%%%%%%%%%%%%%%%%%%%%%%%%%%%%%%%%%%%%%%%%%%%%%%%%%%%%%%%%%%
In this section, we prove Theorem~\ref{Conic bundle BM}, which shows that the hypotheses in Theorem~\ref{main-vanishing} cannot in general be weakened to a smaller degree, and that Theorem~\ref{3-6} does not extend to arbitrary conic bundles. We begin with a lemma showing that conic bundles are locally soluble at all sufficiently large primes.

\begin{lemma}\label{local solubility}
Let \(X\) be the surface over \(\mathbb{Q}\) defined by \(y^2-az^2=P(x)\), where \(a\in\mathbb{Z}\setminus\{0\}\) and \(P(x)\in\mathbb{Z}[x]\) is a polynomial of degree \(r\). Let \(p\) be a prime satisfying \(p>r\), \(p\neq 2\), \(p\nmid a\), and such that \(P(x)\) is nonzero modulo \(p\). Then \(X(\mathbb{Q}_p)\neq\emptyset\).
\end{lemma}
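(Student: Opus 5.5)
We look for an $x_0\in\mathbb{Z}_p$ with $P(x_0)\not\equiv 0 \pmod p$, and then solve $y^2-az^2=P(x_0)$ with $y,z\in\mathbb{Z}_p$. This produces a point of the affine chart, and hence a $\mathbb{Q}_p$-point of the smooth projective model $X$.

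The first step is the choice of $x_0$. The reduction $\bar P\in\mathbb{F}_p[x]$ is nonzero by hypothesis and has degree at most $r<p$. A nonzero polynomial of degree less than $p$ has fewer than $p$ roots in $\mathbb{F}_p$, so some $\bar x_0\in\mathbb{F}_p$ satisfies $\bar P(\bar x_0)\neq0$. Any lift $x_0\in\mathbb{Z}_p$ of $\bar x_0$ then has $c:=P(x_0)\in\mathbb{Z}_p^\times$.

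The second step solves the norm equation. Since $p\nmid 2a$, the quadratic form $y^2-az^2$ is nondegenerate over $\mathbb{F}_p$. A nondegenerate binary quadratic form over a finite field of odd characteristic represents every element of $\mathbb{F}_p^\times$ (Chevalley--Warning, or a counting argument on $y^2=\bar c+\bar a z^2$). Hence there are $\bar y,\bar z\in\mathbb{F}_p$ with $\bar y^2-\bar a\bar z^2=\bar c\neq0$, and at least one of $\bar y,\bar z$ is nonzero. The partial derivatives $2y$ and $-2az$ do not both vanish mod $p$, because $p\neq2$ and $p\nmid a$. Hensel's lemma therefore lifts $(\bar y,\bar z)$ to a solution $(y,z)\in\mathbb{Z}_p^2$ of $y^2-az^2=c$. (One can also argue directly: $c$ is a unit and $\mathbb{Q}_p(\sqrt a)/\mathbb{Q}_p$ is unramified or trivial, so every unit is a norm.)

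Finally, the point $(x_0,y,z)$ lies in the smooth locus of the affine surface, because $c\neq0$ means the fibre over $x_0$ is a smooth conic. It therefore lies on the smooth compactification $X$, which contains the smooth part of the affine surface as an open subset, and so $X(\mathbb{Q}_p)\neq\emptyset$. The only delicate point is the first step: we need the hypothesis $p>r$ to guarantee a non-root of $\bar P$ in $\mathbb{F}_p$, and we need $\bar P\neq0$ so that $\bar P$ is a genuine nonzero polynomial. The remaining steps are standard.
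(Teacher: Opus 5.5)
Your proof is correct and follows the paper's argument. You use $p>r$ to pick $\bar x_0\in\mathbb{F}_p$ with $\bar P(\bar x_0)\neq 0$, represent this nonzero value by $y^2-az^2$ over $\mathbb{F}_p$ via the counting argument (the paper intersects two sets of size $(p+1)/2$), and Hensel-lift using that $2y$ and $-2az$ cannot both vanish mod $p$. Your extra remark on passing from a smooth point of the affine surface to the smooth compactification fills in a step the paper leaves implicit.
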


\begin{proof}
Since \(P(x)\) has degree \(r\) and is nonzero modulo \(p\), its reduction modulo \(p\) has at most \(r\) roots in \(\mathbb{F}_p\). As \(p>r\), there exists \(x_0\in\mathbb{F}_p\) such that
\(P(x_0)\not\equiv0\pmod p\). Fix such an \(x_0\), and consider the subsets
\[
S_1=\{y^2:y\in\mathbb{F}_p\}
\quad\text{and}\quad
S_2=\{az^2+P(x_0):z\in\mathbb{F}_p\}
\]
of \(\mathbb{F}_p\). Since \(p\neq2\) and \(p\nmid a\), both \(S_1\) and \(S_2\) have cardinality \((p+1)/2\). Therefore, \(|S_1|+|S_2|=p+1>|\mathbb{F}_p|\),
so \(S_1\cap S_2\neq\emptyset\). Hence there exist \(y_0,z_0\in\mathbb{F}_p\) satisfying \(y_0^2\equiv az_0^2+P(x_0)\pmod p\), and thus \((x_0,y_0,z_0)\) is an \(\mathbb{F}_p\)-point of \(X\).

It remains to check that this point is smooth. Let
\(F(x,y,z)=y^2-az^2-P(x)\). At \((x_0,y_0,z_0)\),
\[
\frac{\partial F}{\partial y}=2y_0
\quad\text{and}\quad
\frac{\partial F}{\partial z}=-2az_0.
\]
Since \(p\neq2\) and \(p\nmid a\), both partial derivatives vanish modulo \(p\) only if \(y_0\equiv z_0\equiv0\pmod p\). This would imply \(P(x_0)\equiv y_0^2-az_0^2\equiv0\pmod p\), contradicting the choice of \(x_0\). Hence \((x_0,y_0,z_0)\) is a smooth \(\mathbb{F}_p\)-point of \(X\). By Hensel's lemma, it lifts to a point of \(X(\mathbb{Q}_p)\).
\end{proof}
Before proving Theorem~\ref{Conic bundle BM}, we recall the notation. $f(x)=-(67x^4+340x^3+1095x^2+1700x+1675)(x^2+1)$ and we have
$g(x)=(1+2\sqrt{17})x^2 +5\sqrt{17}x
+5(1+2\sqrt{17})$,
and let \(X/\mathbb{Q}\) and \(L=\mathbb{Q}(\sqrt{17})\) be as in Theorem~\ref{Conic bundle BM}. 

\begin{proof}[Proof of Theorem~\ref{Conic bundle BM}]
We first show that \(X(\A_{\Q})\neq\emptyset\), and then compute the local invariants of the Brauer class \(\mathcal{B}=(5,g(x))\) over \(L\).

Since \(5>0\), it is a square in \(\mathbb{R}\), and hence
\(X(\mathbb{R})\neq\emptyset\). For every prime \(p\ge7\),
Lemma~\ref{local solubility} gives
\(X(\mathbb{Q}_p)\neq\emptyset\). It therefore remains to consider
\(p=2,3,5\).
For \(p=2,3\), evaluating at \(x=0\) gives
\(f(0)=-1675\), and
\((5,f(0))_p=+1\). Hence the fiber above
\(x=0\) has a \(\mathbb{Q}_p\)-point. For \(p=5\), evaluating at \(x=1\) gives
\(f(1)=-9754\), and
\((5,-9754)_5=+1\). Hence the fiber above
\(x=1\) has a \(\mathbb{Q}_5\)-point.
Thus \(X(\mathbb{A}_{\mathbb{Q}})\neq\emptyset\).

We now compute the local invariants of
\(\mathcal{B}=(5,g(x))\)
at every place \(w\) of \(L\). If \(w\) is archimedean, then
\(5\) is a square, so
\(\operatorname{inv}_w(\mathcal{B}(P_w))=0\). It therefore remains to consider the non-archimedean places.

Let \(S=\{2,5,17,19,89,2268209\}\). We first consider places \(w\nmid p\) for every \(p\in S\) and show that for such places,
\(\operatorname{inv}_w(\mathcal{B}(P_w))=0\) for all \(P_w\in X(L_w)\). 
By Theorem~\ref{places of good reduction for normic bundles},
\(X_L\) has good reduction at \(w\), and
\cite[Theorem~13.3.15]{CS}
shows that the evaluation map is constant. It therefore suffices to compute
\(\operatorname{inv}_w(\mathcal{B}(P_w))\)
at a single point of
\(X(L_w)\).\\
If \(w\nmid67\), then \((-5,-67)_w=+1\), thus \(X_\infty(L_w) \neq \emptyset\). For \(P_w\in X_\infty(L_w)\),  \(\operatorname{inv}_w(\mathcal{B}(P_w))=\operatorname{inv}_w(5,1+2\sqrt{17})\). Since both \(5\) and \(1+2\sqrt{17}\) are \(w\)-adic units, the Hilbert symbol \((5,1+2\sqrt{17})_w=+1\), and hence \(\operatorname{inv}_w(\mathcal{B}(P_w))=0\).\\ 
If \(w\mid67\), then \((-5,f(1))_w =  (-5, -9754)_w=+1\), thus \(X_1(L_w) \neq \emptyset\). For \(P_w\in X_1(L_w)\), \(\operatorname{inv}_w(\mathcal{B}(P_w))= \operatorname{inv}_w(5,6+17\sqrt{17})\). Again, both entries are \(w\)-adic units, so \((5,6+17\sqrt{17})_w=+1\), and therefore \(\operatorname{inv}_w(\mathcal{B}(P_w))=0\).

Next suppose that \(w\mid2\) or \(w\mid17\). Again,
\(\operatorname{inv}_w(\mathcal{B}(P_w))=0\) for all
\(P_w\in X(L_w)\). Indeed,
\(L_w(\sqrt5)/L_w\) is unramified, and
\(g(x)\) has no roots in either \(\mathbb{F}_2\) or
\(\mathbb{F}_{17}\). Since \(g(x)\) has degree \(2\) and
\(w\nmid67\), its valuation is even everywhere, being \(0\) when
\(v_w(x)\ge0\) and \(-2k\) when \(v_w(x)<0\). Hence
\(\operatorname{inv}_w(\mathcal{B}(P_w))=0\).

Now let \(w\mid p\) with
\(p\in\{19,89,2268209\}\). Again,
\(\operatorname{inv}_w(\mathcal{B}(P_w))=0\) for all
\(P_w\in X(L_w)\). Since \(p\) splits completely in \(L\), we have
\(L_w\cong\mathbb{Q}_p\). By quadratic reciprocity,
\(5\) is a square in \(\mathbb{Q}_p\), so \((5,g(x))_w=+1\) for every \(x\in X(L_w)\).

Finally, suppose that \(w\mid5\). We show that \(\operatorname{inv}_w(\mathcal{B}(P_w))
=1/2\) for every \(P_w\in X(L_w)\).\\
Fix \(P_w=[y:z:x]\in X(L_w)\). Since \(5\) is inert in \(L\), the residue field of \(L_w\) is \(\mathbb{F}_{25}\). Note that in this case \(g(x)\equiv(1+2\sqrt{17})x^2\mod w\). We distinguish three cases.

\noindent
\textit{Case 1.} Suppose that \(v_w(x)=0\). Then \(g(x)\equiv(1+2\sqrt{17})x^2\pmod w\), implies \(v_w(g(x))=0\). By \cite[Proposition~3.4]{Neu},
\[
(5,g(x))_w
=
(5,1+2\sqrt{17})_w
=
(1+2\sqrt{17})^{(25-1)/2}\pmod w.
\]
Checking that \(1+2\sqrt{17}\) is a nonsquare in \(\mathbb{F}_{25}\) is equivalent to checking that
\(\operatorname{N}_{\mathbb{F}_{25}/\mathbb{F}_5}(1+2\sqrt{17})\)
is a nonsquare in \(\mathbb{F}_5\). Since \(\operatorname{N}_{\mathbb{F}_{25}/\mathbb{F}_5}(1+2\sqrt{17})
\equiv3\pmod5\), and \(3\) is not a square in \(\mathbb{F}_5\), we obtain \((5,g(x))_w=-1\).

\noindent
\textit{Case 2.} Suppose that \(v_w(x)\ge1\). Then
\(v_w(g(x))\ge1\), and
\[
\begin{aligned}
(5,g(x))_w
&=(5,5)_w(5,g(x)/5)_w\\
&=(5,g(x)/5)_w
\\
&=(g(x)/5)^{(25-1)/2}\mod w\\
&=(1+2\sqrt{17})^{(25-1)/2}\mod w\\
&=-1.
\end{aligned}
\]

\noindent
\textit{Case 3.} Suppose that \(v_w(x)=-k<0\). Then \(g(x)
=
x^2(1+2\sqrt{17})(1+\varepsilon)\), where \(\varepsilon
=
\frac{5\sqrt{17}x+5(1+2\sqrt{17})}
{(1+2\sqrt{17})x^2}\). Since \(v_w(\varepsilon)>0\),
\[
(5,g(x))_w
=
(5,x^2)_w
(5,1+2\sqrt{17})_w
(5,1+\varepsilon)_w.
\]
Now \(1+\varepsilon\) is a square modulo \(w\), and hence, by Hensel's lemma, a square in \(L_w\). Thus
\[
(5,1+\varepsilon)_w=(5,x^2)_w=+1,
\]
while, by Case~1, \((5,1+2\sqrt{17})_w=-1\). Therefore, \((5,g(x))_w=-1\). \\
Hence \(\operatorname{inv}_w(\mathcal{B}(P_w))
=\frac12\) for every \(P_w\in X(L_w)\).

Summing the local invariants over all places of \(L\), we obtain \(\sum_{w\in\Omega_L}
\operatorname{inv}_w(\mathcal{B}(Q_w))
=
\frac12\) for every \((Q_w)_w\in X(\mathbb{A}_L)\). Therefore, \(X(\mathbb{A}_L)^{\mathcal{B}}
=\emptyset\).
\end{proof}

The following example shows that the \(p\)-divisibility hypothesis on \([L:k]\) in Theorem~\ref{main-vanishing} and Theorem~\ref{3-6} is necessary.

\begin{example}\label{bad place split completely implies persistence}
Let \(p\) be a prime, and let \(k\) be a number field containing primitive $p$th root of unity. Let \(X\) be the \((p,2p)\)-normic bundle constructed in \cite[Proposition~4.1]{AVB}. The authors show that \(X\) has Brauer--Manin obstruction to the Hasse principle over \(k\), captured by a cyclic algebra of degree \(p\) representing a class \(\alpha\in\overline{\Br}X\) and has constant evaluation at every place. Let \(S\) denote the places of bad reduction of \(X\), which is a finite set. For each place of \(k\) outside \(S\), the evaluation map is constant by \cite[Theorem 13.3.15]{CS} and hence zero by computation. Let \(n\) be an integer coprime to \(p\). By the Grunwald--Wang theorem, there exists a finite extension \(L/k\) of degree \(n\) in which every place of \(S\) splits completely. It then follows that the Brauer--Manin obstruction on \(X\) persists after base change to \(L\); that is, \(X(\A_L)^{\Br}=\emptyset\).

Thus, we obtain examples of \((p,mp)\)-normic bundles \(X\) and finite extensions \(L/k\) of every degree \(n\) coprime to \(p\) such that
\(X(\mathbb{A}_L)^{\Br}=\emptyset\).
\end{example}

%%%%%%%%%%%%%%%%%%%%%%%%%%%%%%%%%%%%%%%%%%%%%%%%%%%%%%%%%%%%%%%%%%%%%%%%%%%%%%%%%%%%%%%%%%%%%%%%%%%%%%%%%%%%%%%%%%%%%%%%%%%%%%%%%%%%%%%%%%%%%%%%%%%%%%%%%%%%%%%%%%%%%%%%%%%%%%%%%%%%%%%%%%%%%

\end{document}